\newcommand{\removelatexerror}{\let\@latex@error\@gobble}
\newcommand{\xvbox}[2]{\makebox[#1][l]{#2}} 
\newcommand{\Pis}[1]{\Pi_{\mathrm{state}}(#1)}
\newcommand{\Pit}[1]{\Pi_{\mathrm{traj}}(#1)}
\newcommand{\Pic}[1]{\Pi_{\mathrm{cost}}(#1)}
\newcommand{\uxone}{u_x^{\mathrm{one}}}
\let\bbordermatrix\bordermatrix
\patchcmd{\bbordermatrix}{8.75}{4.75}{}{}
\newcommand{\real}{\mathbb{R}}
\newcommand{\realnonnegative}{{\mathbb{R}}_{\ge 0}}
\newcommand{\naturalnumbers}{\mathbb{N}}
\newcommand{\norm}[1]{\ensuremath{\| #1 \|}}
\newcommand{\until}[1]{[\, #1 \,]}
\newcommand{\map}[3]{#1:#2 \rightarrow #3}
\newcommand{\setdef}[2]{\{#1 \; | \; #2\}}
\newcommand{\setmap}[3]{#1:#2 \rightrightarrows #3}
\newcommand{\ones}{\mathbf{1}}
\newcommand{\myemphc}[1]{\emph{#1}} 
\newcommand{\setr}[1]{\{#1\}}
\newcommand{\abs}[1]{|#1|}
\newcommand{\xtraj}{\mathsf{x}}
\newcommand{\utraj}{\mathsf{u}}
\newcommand{\Unsafe}{\mathcal{U}\mathcal{I}}
\newcommand{\JJ}{\mathcal{J}}
\newcommand{\Qo}{\overline{Q}}
\newcommand{\DDo}{\overline{\DD}}
\newcommand{\drmpc}{\mathtt{DR\_MPC}}
\newcommand{\rf}{\mathfrak{r}}
\newcommand{\st}{\operatorname{subject \text{$\, \,$} to}}
\renewcommand{\st}{\operatorname{s.t.}}
\newcommand{\Eb}{\mathbb{E}}
\newcommand{\Pb}{\mathbb{P}}
\newcommand{\Db}{\mathbb{D}}
\newcommand{\Wb}{\mathbb{W}}
\newcommand{\Data}{\widehat{\mathcal{W}}}
\newcommand{\DD}{\mathcal{D}}
\newcommand{\HH}{\mathcal{H}}
\newcommand{\II}{\mathcal{I}}
\newcommand{\KK}{\mathcal{K}}
\newcommand{\NN}{\mathcal{N}}
\newcommand{\OO}{\mathcal{O}}
\newcommand{\PP}{\mathcal{P}}
\newcommand{\QQ}{\mathcal{Q}}
\newcommand{\SSs}{\mathcal{S}}
\newcommand{\SSo}{\overline{\SSs}}
\newcommand{\UU}{\mathcal{U}}
\newcommand{\WW}{\mathcal{W}}
\newcommand{\XX}{\mathcal{X}}
\newcommand{\CVaR}{\operatorname{CVaR}}
\newcommand{\VaR}{\operatorname{VaR}}
\newcommand{\dist}{\operatorname{dist}}
\newcommand{\eps}{\epsilon}
\newcommand{\diam}{\operatorname{diam}}
\newcommand{\what}{\widehat{w}}
\newcommand{\costgo}[2]{J_{(#1:#2)}}
\newcommand{\jth}{j^{\mathrm{th}}}
	\newcommand{\xt}{\tilde{x}}
\newcommand{\ut}{\tilde{u}}
\newcommand{\oprocendsymbol}{\hbox{$\bullet$}}
\newcommand{\oprocend}{\relax\ifmmode\else\unskip\hfill\fi\oprocendsymbol}
\newcommand{\longthmtitle}[1]{\mbox{}\textup{\textsl{(#1):}}}
\newcommand{\ifinclude}[1]{}
\newcommand{\hide}[1]{}
\renewcommand{\hide}[1]{#1}
\newcommand{\thickhline}{%
  \noalign {\ifnum 0=`}\fi \hrule height 1pt
  \futurelet \reserved@a \@xhline
}
\newcolumntype{"}{@{\hskip\tabcolsep\vrule width 1pt\hskip\tabcolsep}}
\newtheorem{theorem}{Theorem}[section]
\newtheorem{proposition}{Proposition}[section]
\newtheorem{lemma}[theorem]{Lemma}
\theoremstyle{definition}
\newtheorem{assumption}{Assumption}[section]
\newtheorem{remark}[theorem]{Remark}
\definecolor{new}{rgb}{0.55,0,0.55}
\title{Iterative risk-constrained model predictive control: A data-driven  distributionally robust approach}
\author{Alireza Zolanvari \qquad Ashish Cherukuri \thanks{The authors are with the Engineering and Technology Institute Groningen, University of Groningen. Email: \texttt{\{a.zolanvari,a.k.cherukuri\}@rug.nl}. This work was partly supported with a scholarship from the Data Science and Systems Complexity (DSSC) Center, University of Groningen. }}
\begin{document}

\maketitle
\thispagestyle{empty}

\begin{abstract}
This paper proposes an iterative distributionally robust model predictive control (MPC) scheme to solve a  risk-constrained infinite-horizon optimal control problem. In each iteration, the algorithm generates a trajectory from the starting point to the target equilibrium state with the aim of respecting risk constraints with high probability (that encodes safe operation of the system) and improving the cost of the trajectory as compared to previous iterations. At the end of each iteration, the visited states and observed samples of the uncertainty are stored and accumulated with the previous observations. For each iteration, the states stored previously are considered as terminal constraints of the MPC scheme, and samples obtained thus far are used to construct distributionally robust risk constraints. As iterations progress, more data is obtained and the environment is explored progressively to ensure better safety and cost optimality. 
We prove that the MPC scheme in each iteration is recursively feasible and the resulting trajectories converge asymptotically to the target while ensuring safety with high probability. 
 We identify conditions under which the cost-to-go reduces as iterations progress. For systems with locally one-step reachable target, we specify scenarios that ensure finite-time convergence of iterations. We provide computationally tractable reformulations of the risk constraints for total variation and Wasserstein distance-based ambiguity sets.  A simulation example illustrates the application of our results in finding a risk-constrained path for two mobile robots facing an uncertain obstacle. 	
\end{abstract}

\section{Introduction}\label{sec:intro}

\IEEEPARstart{P}{ractical} control systems often operate in uncertain environments, for example, a mobile robot navigating in the presence of obstacles. Safe optimal control in such situations can be modeled using different sets of constraints. On the one hand, robust constraints  consider the worst-case effect of uncertainty on control design. On the other hand, popular probabilistic approaches embed chance constraints in the optimal control problem to ensure safety. A convenient strategy to balance these approaches is to consider risk constraints. We adopt this approach in our work and define a risk-constrained infinite-horizon optimal control problem. We assume that the task needs to be performed in an iterative way and the data regarding the uncertainty is incrementally revealed as iterations progress. For this setting, we design an iterative method that combines the notions of learning model predictive control~\cite{UR-FB:17-tac} and distributionally robust risk constraints~\cite{ARH-AC-JL:19-acc}.

\subsubsection*{Literature review}

	To solve optimization problems with uncertainty, and in particular risk constraints, one needs to know the underlying distribution of the random variable. Often this information is available to the decision-maker in terms of the samples (data). In the case, the dataset is small or samples are corrupted, data-driven distributionally robust (DR) optimization~\cite{HR-SM:19-arXiv} presents itself as an appealing solution. This DR framework solves optimization problems by taking a worst-case approach over a set of probability distributions termed ambiguity set. This set is constructed using samples, often in a manner that ensures a required level out-of-sample performance, which translates to safety under unknown scenarios in control problems. Taking advantage of this property of the DR framework, several recent works~\cite{ZZ-JJZ:23,  LA-MF-JL-FD:23, MF-JL:22, FM-TS-JL:22, CM-SL:21, JC-JL-FD:22, PC-PP:21, MS-PP:23, ZZ-EADR-PP:2023,AD-MA-JWB:2022-lcss} explore distributional robustness in the context of Model Predictive Control (MPC). 
	These can be broadly categorized into methods that focus on propagation of ambiguity sets~\cite{ZZ-JJZ:23, LA-MF-JL-FD:23}, tube-based MPC~\cite{MF-JL:22,CM-SL:21},  reformulations and finite-sample guarantees~\cite{AD-MA-JWB:2022-lcss, FM-TS-JL:22}, and chance-constraints and data-driven control~\cite{JC-JL-FD:22}. In~\cite{PC-PP:21, MS-PP:23}, reformulations and guarantees for DR-based MPC are explored while using the connection that the so-called coherent risk measure of a random variable is equivalent to the worst-case expectation over a set of distributions. Similar connection is used to investigate risk-averse MPC for Markovian switched systems in~\cite{PS-DH-AB-PP:18, SS-YC-AM-MP:19-tac}

Most of the above-listed works on MPC are focused on stochastic systems, while we consider deterministic systems that operate under uncertain environments.  As shown in~\cite{AH-GCK-IY:19, AD-MA-JWB:20, SXW-AD-ShT-JWB:2022}, this latter setup finds application in motion planning by encoding safety against collision in form of risk constraints. In~\cite{AH-IY:20-ICRA}, motion planning problem with risk constraints is considered where Wasserstein ambiguity sets are used to impose the risk constraints in  a distributionally robust manner. This approach was further extended in~\cite{AH-IY:23} where obstacles are assumed to have dynamics that is unknown to the planner and DR constraints are used to robustify against this lack of information. In~\cite{AN-ARH:2022}, DR risk-constrined MPC is studied in the case of multi-robot systems. In contrast with these approaches, we explore the possibility of executing the task in an iterative manner. An iterative approach is a natural one when the data regarding uncertainty is less initially, and more samples become available over time. As a consequence, the environment can be explored progressively. Such a framework also facilitates motion planning and control for the case where a reference trajectory is not available.  The safety level ensured in this process can be tuned using DR constraints. To realize such a method, we employ the learning model predictive framework introduced in~\cite{UR-FB:17-tac}. Here, at each iteration, a part of the state space is explored and stored for future iterations where these states are used as terminal constraints. In \cite{MB-CV-FB:20}, a learning-based MPC has been developed to tackle the uncertainties in the problem's constraints in a safe procedure. However, these strategies aim at satisfying robust and not risk constraints.

\subsubsection*{Setup and contributions} %
We start by defining in Section~\ref{sec:problem} an infinite-horizon optimal control problem for a discrete-time deterministic system, where the state is subjected to a conditional value-at-risk constraint at every time step. The optimal control problem encodes the task of taking the state of the system from a starting point to a target equilibrium. We present our main contribution in Section~\ref{sec:method} in the form of the distributionally robust iterative MPC scheme that progressively approximates the solution of the infinite-horizon problem.  In each iteration of our procedure, we generate a trajectory using an MPC scheme that takes the system from the start to the target state. The risk constraint at each iteration are imposed for all distributions contained in a general class of ambiguity sets defined using data collected in previous iterations. The terminal constraint of the MPC scheme enforces the state to lie in a subset of the states that were visited in previous iterations. We termed these states as sampled-safe-set. Once a trajectory is generated, the uncertainty samples collected in the iteration are added to the dataset, the ambiguity set is updated, and the sampled-safe-set is modified to only retain states that satisfy the risk constraint for the newly formed ambiguity set.

We establish several properties of our proposed method in Section~\ref{sec:properties} and~\ref{sec:one-step}. \emph{First,} under the assumption that a robustly feasible trajectory is available at the first iteration, we show that each iteration is recursively feasible and safe, where safety means satisfying the risk constraint with high probability. Further, we prove that each trajectory asymptotically converges to the target state. \emph{Second}, we identify conditions under which the set of sampled safe states grow at the end of an iteration and the trajectory cost decrease. A key requirement for this to happen is continuity of optimal finite-horizon cost. Therefore, the \emph{third} set of results analyze system properties that ensure such continuity. Since our method requires that each trajectory is terminates in finite time, in our \emph{fourth} set of results, we provide conditions on system dynamics, constraints, and cost function that guarantee finite-time convergence.

In addition to properties of our method, in Section~\ref{sec:comp-trac}, we provide tractable reformulations of the risk constraint for two distance-based ambiguity sets, those using total variation and Wasserstein metric. In the end, in Section~\ref{sec:sims}, we apply our algorithm to find risk-averse paths for two mobile robots in the presence of a randomly-moving obstacle.

A preliminary version of this work was published as~\cite{AZ-AC:22-ecc}. Compared to it, this work contains in addition the following results: (i) continuity of the optimal value of the optimization problem solved at each time-step (see Proposition~\ref{pr:across-iter} and~\ref{pr:cost-continuity}); (ii) the finite-time reachability of the target point (Section~\ref{sec:one-step}); and (iii) reformulations of the DR risk constraints (Section~\ref{sec:comp-trac}). Moreover, this version contains detailed proofs and presents a more elaborate simulation example.

\section{Preliminaries}\label{sec:prelims}
Here we collect notation and mathematical background.  
\subsubsection{Notation}\label{subsec:notation}
Let $\real$, $\realnonnegative$, and $\naturalnumbers$ denote the set of real, non-negative real, and natural numbers, respectively. The set of natural numbers excluding zero is denoted as $\naturalnumbers_{\ge 1}$. Let $\norm{\cdot}$ and $\norm{\cdot}_1$ denote the Euclidean $2$- and $1$-norm, respectively. For $N \in \naturalnumbers$, we denote $[N] := \{0,1,\dots,N\}$. Given $x \in \real$, we let $[x]_+ = \max(x,0)$. Given two sets $X$ and $Y$, a set-valued map $\setmap{f}{X}{Y}$ associates to each point in $X$ a subset of $Y$. The $n$-fold Cartesian product of a set $\SSs$ is denoted as $\SSs^n$. The number of elements in a set $\SSs$ is given by $\abs{\SSs}$. The $n$-dimensional unit simplex is denoted as $\Delta_{n}$.  The open ball of radius $\delta > 0$ centered at $x \in \real^n$ is denoted as $B_\delta(x) = \setdef{y \in \real^n}{\norm{x-y} < \delta}$. A function $\map{\alpha}{\realnonnegative}{\realnonnegative}$ is class
	$\KK$ if it is continuous, strictly increasing, and $\alpha(0) = 0$. The diameter of a closed set $\XX \subset \real^n$ is given as $\diam(\XX) := \max_{x_1, x_2 \in \XX} \norm{x_1 - x_2}$.

\subsubsection{Conditional Value-at-Risk}\label{subsec:cvar}
We review notions on conditional value-at-risk (CVaR) from~\cite{AS-DD-AR:14}. Given a real-valued random variable $Z$ with probability distribution $\Pb$ and $\beta \in (0,1)$, the \myemphc{value-at-risk} of $Z$ at level $\beta$, denoted $\VaR_\beta^{\Pb}[Z]$, is the left-side $(1-\beta)$-quantile of $Z$. Formally, $\VaR_\beta^\Pb [Z]  	 = \inf \setdef{\zeta}{\Pb(Z \le \zeta) \ge 1-\beta}$. 
The \myemphc{conditional value-at-risk (CVaR)} of $Z$ at level $\beta$, denoted $\CVaR_\beta^\Pb [Z]$, is given as 
\begin{align}\label{eq:cvar-def-alt}
	\CVaR_\beta^\Pb [Z] = \inf_{t \in \real} \Bigl\{ t + \beta^{-1} \Eb^\Pb[Z - t]_+ \Bigr\},
\end{align}
where $\Eb^\Pb[\,\cdot\,]$ denotes expectation under $\Pb$. Under continuity of the cumulative distribution function of $Z$ at $\VaR_\beta^\Pb[Z]$, we have 
	$\CVaR_\beta^\Pb [Z] := \Eb^\Pb[Z \ge \VaR_\beta^\Pb [Z]]$.
The parameter $\beta$ characterizes risk-averseness. When $\beta$ is close to unity, the decision-maker is risk-neutral, whereas, $\beta$ close to the origin implies high risk-averseness.

\section{Problem Statement} \label{sec:problem}

Consider the following discrete-time system:
\begin{equation}\label{sys}
    x_{t+1}  = f(x_t,u_t), 
\end{equation}
where $\map{f}{\real^{n_x} \times \real^{n_u}}{\real^{n_x}}$ defines the dynamics and $x_t\in\real^{n_x}$ and $u_t\in\real^{n_u}$ are the state and control input of the system at time $t$, respectively. The system state and control input are subject to the following deterministic constraints:
\begin{equation}
    x_t\in \XX , u_t\in \UU , \quad  \forall t\geq 0,
\end{equation}
where $\XX$ and $\UU$ are assumed to be \emph{compact} sets. The aim is to solve an infinite-horizon risk-constrained optimal control problem for system~\eqref{sys} that drives the system to a target equilibrium point $x_F \in \XX$. To that end, let $\map{r}{\XX \times \UU}{\realnonnegative}$ be a continuous function that represents the \emph{stage cost}  associated to the optimal control problem. We assume that %
\begin{align}\label{eq:st-cost}
\begin{cases}
r(x_F, 0) & = 0,\\
r(x, u) & > 0, \quad \forall (x,u) \in (\XX \times \UU) \setminus \{(x_F,0)\}.%
\end{cases}
\end{align}
Using this cost function, the \emph{risk-constrained infinite-horizon optimal control problem} is given as
\begin{subequations}\label{eq:IHOCP}
\begin{align}
	\min\quad  &\sum_{t=0}^{\infty}r(x_{t}, u_{t})\label{eq:IHOCP-obj} %
    \\
    \text{s.t.}  \quad & x_{t+1} = f(x_t,u_t), \quad \forall t\geq 0,\label{eq:IHOCP-a}
    \\
    \quad & x_t \in \XX , u_t\in \UU , \quad  \forall t\geq 0,\label{eq:IHOCP-c}
    \\
    \quad & x_0 = x_S,\label{eq:IHOCP-b}
    \\
    \quad & \CVaR_{\beta}^\mathbb{P}\left[g(x_t,w)\right] \leq \delta, \quad \forall t \geq 0, \label{eq:IHOCP-d}
\end{align}
\end{subequations}
where $x_S \in \XX$ is the initial state and constraint~\eqref{eq:IHOCP-d} represents the risk-averseness. Here, $\CVaR$ stands for the conditional value-at-risk (see Section~\ref{subsec:cvar} for details), $w$ is a random variable with distribution $\Pb$ supported on the compact set $\WW \subset \real^{n_w}$,  $\delta > 0$ is the risk tolerance parameter, 
$\beta > 0$ is the risk-averseness coefficient, and the continuous function $\map{g}{\XX \times \WW}{\real}$  is referred to as the constraint function. The constraint~\eqref{eq:IHOCP-d} ensures that the risk associated to the state at any time, as specified using the random function $g$, is bounded by a given parameter $\delta$. More generally, the constraint can be perceived as a safety specification for system~\eqref{sys} under uncertain environments. 

The infinite-horizon problem~\eqref{eq:IHOCP} is difficult to solve in general due to state, input, and risk constraints. Besides, in practice, the distribution $\Pb$ is usually unknown beforehand. To tackle these challenges, we propose a data-driven iterative MPC scheme outlined in the following section.

\section{Distributionally Robust Risk-constrained Iterative MPC}\label{sec:method}%
In this section, we provide an iterative strategy for solving the infinite-horizon optimal control problem~\eqref{eq:IHOCP} in an approximate manner. Here, each iteration refers to an execution of the control task, that is, taking the system state from $x_S$ to $x_F$ in a safe manner. Our iterative framework is inspired by~\cite{UR-FB:17-tac} and roughly proceeds in the following manner. At the start of any iteration $j$, we have access to a finite number of samples of the uncertainty, a set of safe states, and the cost it takes to go from each of these safe states to the target. In iteration $j$, we use this prior knowledge and define an MPC scheme that constructs a safe trajectory starting at $x_S$ and ending at $x_F$. The aim of this newly generated trajectory is to possibly reduce the cost or improve safety as compared to the previous iterations. At the end of the iteration, we update the dataset with samples gathered along the execution of the MPC scheme. Subsequently, we update the set of safe states. In the following, we make all the necessary ingredients of the iterative framework precise and later put them together in the form of  Algorithm~\ref{ag:DR_iteration}. 

\subsection{Components of the Iterative Framework}

\subsubsection{Trajectories}
Every iteration results in a trajectory. The system state and the control input at time $t$ of the $\jth$ iteration are denoted as $x_t^j$ and $u_t^j$, respectively, and the $\jth$ \emph{trajectory} is given by concatenated sets:
\begin{equation}\label{cl-traj}
	\begin{split}
		\xtraj^j& :=[x_0^j, x_1^j, \dots, x_t^j, \dots, x_{T_j}^j],\\
		\utraj^j& :=[u_0^j, u_1^j, \dots, u_t^j, \dots, u_{T_j - 1}^j].
	\end{split}
\end{equation}
We assume that all trajectories start from $x_S$, that is, $x_0^j = x_S$ for all $j \geq 1$. While our objective is to solve an infinite-horizon problem~\eqref{eq:IHOCP}, for practical considerations, we aim to find trajectories that reach the target $x_F$ in a finite number of steps. Thus, we assume that for each iteration $j$, the \emph{length} of the trajectory is finite, denoted by 
$T_j \in \naturalnumbers_{\ge1}$. Throughout the paper, whenever we mention trajectory of states, we implicitly mean that there exists a feasible control sequence that makes this trajectory of states possible. %

\subsubsection{Data and Ambiguity Sets}
At the start of iteration $j$,  a \emph{dataset} $\Data^{j-1} := \setr{\what_1, \dots, \what_{N_{j-1}}} \subset \WW$ of $N_{j-1}$ i.i.d. samples of the uncertainty $w$ drawn from $\Pb$ is available. Here, the index $j-1$ indicates the samples collected till iteration $j-1$. We assume that we collect one sample per time-step of each iteration and so the number of samples available for iteration $j+1$ are $N_j = N_{j-1} + T_j$. Our aim is to use the dataset $\Data^{j-1}$ to enforce the risk constraint~\eqref{eq:IHOCP-d} in an appropriate sense for the trajectory generated in the $\jth$ iteration. To this end, we adopt a distributionally robust approach. That is, we generate a set of distributions, termed \emph{ambiguity set}, that contains the underlying distribution $\Pb$ with high probability. We then enforce the risk constraint~\eqref{eq:IHOCP-d} for all distributions in the ambiguity set. To put the notation in place, assume that given a \emph{confidence parameter} $\zeta \in (0,1)$, we have access to a map   $\setmap{\Db}{\WW_\infty}{\PP(\WW)}$ such that given any set of $N$ i.i.d samples $\Data_N = \setr{\what_1, \dots ,\what_N}$ the set of distributions $\Db(\Data_N)$ contains $\Pb$ with confidence $\zeta$. In the definition of the map, the domain is $\WW_\infty = \cup_{i=1}^\infty \WW^i$ and $\PP(\WW)$ denotes the set of all distributions supported on $\WW$. We assume that $\Db$ always leads to a closed and nonempty ambiguity set.  We term $\Db$ as the \emph{ambiguity set generating map}. Given $\Db$, our strategy is to set the ambiguity set used for iteration $j$ as $\DD^{j-1}:=\Db(\Data^{j-1})$. The assumption on $\Db$ imply that $\DD^{j-1}$ is \emph{$(\zeta,\Pb^{\abs{\Data^{j-1}}})$-reliable}, that is, 
\begin{align}\label{eq:amb_def-n}
	\Pb^{\abs{\Data^{j-1}}} \left( \Pb\in\DD^{j-1} \right)\geq \zeta.
\end{align}
The above property implies that for the MPC scheme related to the $\jth$ iteration, if we impose the risk constraint~\eqref{eq:IHOCP-d} for all distributions in $\DD^{j-1}$, then the generated trajectory will satisfy the risk constraint with at least probability $\zeta$. Ideally, we must aim to find trajectories that satisfy~\eqref{eq:IHOCP-d}. However, when only limited data regarding the uncertainty is known, one can only enforce such a constraint in a probabilistic manner and the above definition aims to capture this feature. 

\subsubsection{Cost-to-go}
The cost-to-go from time $t$ for the trajectory $(\xtraj^j,\utraj^j)$ generated in iteration $j$, is denoted as:
\begin{align}\label{eq:to-go}
	\costgo{t}{\infty}^j &:=  \sum_{k=t}^\infty r(x_k^j, u_k^j).
\end{align}
Setting $t=0$ in~\eqref{eq:to-go} gives us the cost of the $\jth$ iteration as $\costgo{0}{\infty}^j$, that measures the performance of the controller in that iteration.
For every time-step $t \ge T_j$, we assume that the system remains at $x_F$ and the control input is zero. Thus, the infinite sum in~\eqref{eq:to-go} is well-defined due to~\eqref{eq:st-cost}.

\subsubsection{Sampled-safe-set}
The main advantage of the iterative scheme is that it allows data to be gathered and state space to be explored in an incremental manner. That is, we keep track of all samples from past iterations (discussed above) and we also maintain a set of safe states (along with the minimum cost that it takes to go to the target from them) that were visited in the previous iterations. These safe states are used in an iteration as terminal constraints in the MPC scheme (as proposed in~\cite{UR-FB:17-tac}).

In iteration $j$, the risk constraint~\eqref{eq:IHOCP-d} is imposed for all distributions in $\DD^{j-1}$ in the finite-horizon optimal control problem solved in the MPC scheme (see Section~\ref{sec:dr-finite-horizon}). Thus, due to~\eqref{eq:amb_def-n}, the trajectory $(\xtraj^j,\utraj^j)$ is \emph{$(\zeta,\Pb^{\abs{\Data^{j-1}}})$-safe}, that is 
\begin{align}\label{eq:traj-safety}
	\Pb^{\abs{\Data^{j-1}}} \left( \CVaR_{\beta}^\Pb \left[g(x^j_t,w)\right] \le \delta\right) \ge \zeta
\end{align}
for all $t \in [T_j]$. Note that $\xtraj^j$ is safe with respect to the dataset $\Data^{j-1}$. However, since the next iteration $j+1$ is built considering safety with respect to the dataset $\Data^j$, all previously generated trajectories need to be $(\zeta,\Pb^{\abs{\Data^{j}}})$-safe to be considered as the set of safe states in iteration $j+1$. In particular, the \emph{sampled-safe-set} obtained at the end of iteration $j$ and to be used in iteration $j+1$, denoted $\SSs^{j} \subseteq \until{j} \times \XX \times \realnonnegative$, is defined recursively as
\begin{align}\label{eq:SS-update-gen}
	\SSs^{j} = \mathbb{S}^j \Bigl(\SSs^{j-1} \cup \setr{(j,x_t^{j},\costgo{t}{\infty}^{j})}_{t=1}^{T_j} \Bigr).  
\end{align}
In the above expression, the set $\setr{(j,x_t^{j},\costgo{t}{\infty}^{j})}_{t=1}^{T_j}$ collects the set of states visited in iteration $j$, along with the associated cost-to-to. The counter $j$ is maintained in this set to identify the iteration to which a state with a particular cost-to-go is associated with. The set $\SSs^{j-1}$ is the sampled-safe-set used in iteration $j$. The map $\mathbb{S}^j$ only keeps the states that are safe with respect to the new data set $\Data^j$. This aspect of our method is different from~\cite{UR-FB:17-tac} where explored states are safe for all future iterations. The exact procedure that defines $\mathbb{S}^j$ is given in our algorithm.

For ease of exposition, we define maps $\Pit{\cdot}$, $\Pis{\cdot}$, and $\Pic{\cdot}$, such that, given a safe set $\SSs$,  $\Pit{\SSs}$, $\Pis{\SSs}$, and $\Pic{\SSs}$ return the set of all trajectory indices, states, and cost-to-go values that appear in $\SSs$, respectively.  The following assumption is required to initialize our iterative procedure with a nonempty sampled-safe-set. 

\begin{assumption}\longthmtitle{Initialization with robust trajectory}\label{assump:safeset}
	Before starting the first iteration, sampled-safe-set $\SSs^0$ contains the states of a finite-length robustly safe trajectory $\xtraj^0$ that starts from $x_S$ and reaches $x_F$. This means that the trajectory $\xtraj^0$ in $\SSs^0$ robustly satisfies all constraints of problem~\eqref{eq:IHOCP}, that is, %
		$x\in\XX,\,g(x, w) \leq \delta$ for all $w \in \WW$,
	and all $x \in \Pis{\SSs^0}$.\oprocend
\end{assumption}

\subsubsection{Minimum Cost-to-go}

Sampled-safe-set $\SSs^j$ keeps track of the cost-to-go associated with each state in the set. However, a state can appear in more than one trajectory.  For such cases, we need to maintain the minimum cost-to-go associated with a state. To this end, given the safe set $\SSs^j$ obtained at the end of iteration $j$, we define the associated minimum cost-to-go map as 

\begin{align}\label{eq:Qj}
	Q^j(x):=\begin{cases}
		\min\limits_{J\in F^j(x)}J, &\quad x\in\Pis{\SSs^j},
		\\
		+\infty, &\quad x\notin\Pis{\SSs^j},
	\end{cases}
\end{align}
where
\begin{align}\label{def:Fj}
	F^j(x) = \setdef{\costgo{t}{\infty}^i }{ \Pis{\left\{(i, x^i_t, \costgo{t}{\infty}^i)\right\}} = \{x\},&\nonumber
		\\ 
		(i, x^i_t, \costgo{t}{\infty}^i) \in \SSs^j}&.
\end{align}
In the above expression, the set $F^j(x)$ collects all cost-to-go values associated to the state $x \in \Pis{\SSs^j}$.  The function $Q^j$ then finds the minimum among these.

\subsubsection{DR Risk-constrained Finite-Horizon Problem}\label{sec:dr-finite-horizon}
Given the above described elements, we now present the finite-horizon optimal control problem solved at each time-step of each iteration. For generality, we write the problem for generic current state $x$, sampled-safe-set $\SSo$, and ambiguity set $\DDo$. Let $K \in \naturalnumbers_{\ge 1}$ be the length of the horizon and consider

\begin{equation}\label{eq:DR-RLMPC:main}
	\mathcal{J}_{(\SSo, \DDo)} (x) := \begin{cases}
		\min & \, \,  \sum_{k=0}^{K-1}r(x_{k}, u_{k}) +\overline{Q}(x_{K}) 
		\\
		\st & \, \, x_{k+1} = f(x_{k},u_{k}),   \forall k\in[K-1],
		\\
		& \, \, x_{k} \in\XX, u_{k}\in\UU,   \forall k\in[K-1],
		\\
		& \, \, x_{0}=x,
		\\
		& \, \, x_{K}\in\Pis{\overline{\SSs}}, 
		\\
		& \, \, \sup_{\mu\in\overline{\DD}} \left[\CVaR_{\beta}^{\mu}\left[g(x_{k},w)\right]\right]\leq \delta, 
		\\
		& \qquad \qquad \qquad \qquad \forall k\in[K-1],
	\end{cases}
\end{equation}
where $\map{\overline{Q}}{\XX}{\real}$ gives the minimum cost-to-go for all states in $\SSo$ and is calculated in a similar manner as in~\eqref{eq:Qj}. The decision variables in the above problem are $(x_0, x_1, \dots, x_K)$ and $(u_0, u_1, \dots, u_{K-1})$. The set $\SSo$ defines the terminal constraint $x_K \in \Pis{\SSo}$. Finally, note that the risk constraint is required to hold for all distributions in the ambiguity set $\overline{\DD}$. Thus, we refer to it as \emph{distributionally robust (DR) constraint}.  For iteration $j$ and time-step $t$, the MPC scheme solves the finite-horizon problem~\eqref{eq:DR-RLMPC:main} with $x = x_t^j$, $\SSo = \SSs^{j-1}$, $\DDo = \DD^{j-1}$, and $\overline{Q} = Q^{j-1}$.

\subsection{The Iterative Framework}
Here, we compile the elements described in the previous section and present our iterative procedure termed \emph{distributionally robust risk-constrained iterative MPC (DR-RC-Iterative-MPC)}. The informal description is given below. 
\begin{quote}
	\emph{[Informal description of Algorithm \ref{ag:DR_iteration}]:}
	Each iteration $j \ge 1$ starts with sampled-safe-set $\SSs^{j-1}$ and an ambiguity set $\DD^{j-1}$. The ambiguity set is constructed (see Line~\ref{ln:ambiguity}) using samples in dataset $\Data^{j-1}$ collected in previous iterations and the map $\Db$ that ensures~\eqref{eq:amb_def-n}. In the first step of the iteration (Line~\ref{ln:drmpc}), a trajectory $(\xtraj^j,\utraj^j)$ is generated by the $\mathtt{DR\_MPC}$ routine (described in Algorithm~\ref{ag:DR_MPC}) to which sampled-safe-set $\SSs^{j-1}$ and the ambiguity set $\DD^{j-1}$ are given as inputs. This trajectory is $(\zeta,\Pb^{\abs{\Data^{j-1}}})$-safe, that is, it satisfies~\eqref{eq:traj-safety}. The samples collected in iteration $j$ are appended to the dataset $\Data^{j-1}$ in Line~\ref{ln:drmpc} and the ambiguity set for the next iteration is constructed in Line~\ref{ln:ambiguity}. The trajectory $\xtraj^j$ along with its associated cost-to-go is appended to the sampled-safe-set in Line~\ref{ln:safe}. In Lines~\ref{ln:uns-traj} to~\ref{ln:ss-update}, sampled-safe-set $\SSs^{j-1}$ is modified to make sure that it only contains trajectories that are $(\zeta,\Pb^{\abs{\Data^{j}}})$-safe. These steps collectively represent the map $\mathbb{S}$ defined in an abstract manner in~\eqref{eq:SS-update-gen}. 	
	 The indices of trajectories present in $\SSs^{j-1}$ are maintained in the set $\II^{j-1}$. In Line~\ref{ln:uns-traj}, trajectories in $\II^{j-1} \cup \{j\}$ for which at least one state is not  $(\zeta,\Pb^{\abs{\Data^{j}}})$-safe are enumerated in the set $\Unsafe^{j}$. Accordingly, in Line~\ref{ln:M}, the set $\II^j$ is updated as trajectories in $\II^{j-1} \cup \{j\}$ that are not in $\Unsafe^j$. The states visited in these trajectories are stored in $\SSs^j$ in Line~\ref{ln:ss-update}. Finally, the minimum cost-to-go for all states in $\SSs^j$ is updated in Line~\ref{ln:Q}
\end{quote}
Note that in the above algorithm, the sampled-safe-set is updated in an iterative way. That is, we add the $\jth$ trajectory to $\SSs^{j-1}$ and then check safety with respect to the dataset $\Data^{j}$. In the process, we loose some trajectories in iterations $\{1,\dots, j-1\}$ that could have been $(\zeta,\Pb^{\abs{\Data^{j}}})$-safe. An alternative way would be to store all previous trajectories and check for their safety in each iteration. However, such a process would potentially slow down the algorithm.

\begin{algorithm}[htb]
	\SetAlgoLined
	\DontPrintSemicolon
	\SetKwInOut{Input}{Input}
	\SetKwInOut{Output}{Output}
	\SetKwInOut{init}{Initialize}
	\SetKwInOut{giv}{Data}
	\SetKwInOut{params}{Parameter}
	\Input{%
		\xvbox{2mm}{$\SSs^0$}\quad--$\;$Initial sampled-safe-set \\
		\xvbox{2mm}{$\Data^0$}\quad--$\;$Initial set of samples \\
		\xvbox{2mm}{$\II^0$}\quad--$\;$Index of trajectory in $\SSs^0$
		\\
	}
	\init{
		$j \gets 1$, $\DD^0 = \Db(\Data^0)$, $\Unsafe^{0}\gets\emptyset$}
	\While{$j > 0$ }
	{%
		Set $(\xtraj^{j}, \utraj^{j})\gets \mathtt{DR\_MPC}(\SSs^{j-1}, \DD^{j-1})$; $T^j\gets\mathtt{length}(\xtraj^{j})$; $\Data^{j} \gets \Data^{j-1} \cup \{\what_i\}_{i=1}^{T^j}$ \label{ln:drmpc} \;
		Set $\DD^j \gets\Db(\Data^j)$ \label{ln:ambiguity}\; 
		Set $\SSs^{j-1}\gets\SSs^{j-1} \cup \{(j, x_t^j, J^j_{(t:\infty)})\}_{t=1}^{T_j}$ \label{ln:safe}\;
			Set $\Unsafe^j \gets \{ i \in (\II^{j-1}\cup \{j\} ) \, |(i, x, J) \in \SSs^{j-1},$ $\sup\limits_{\mu \in \DD^{j}} \left[\CVaR_{\beta}^{\mu} \left[g(x,w)\right]\right] > \delta \}$ \label{ln:uns-traj} 
\;
		Set $\II^{j} \gets (\II^{j-1}\cup \{j\} ) \setminus \Unsafe^{j}$ \label{ln:M} \;
		Set $\SSs^{j} \gets \setdef{(i, x, J) \in \SSs^{j-1}}{i \in \II^{j}}$ \label{ln:ss-update}\;
		Compute $Q^j(x)$ for all $x\in\Pis{\SSs^j}$ using~\eqref{eq:Qj}\label{ln:Q}\;
		Set $j\gets j+1$
	}
	
	\caption{DR-RC-Iterative-MPC} %
	\label{ag:DR_iteration} 
\end{algorithm}

Algorithm~\ref{ag:DR_iteration} calls the $\drmpc$ routine in each iteration to generate the trajectory. This procedure is given in Algorithm~\ref{ag:DR_MPC} and informally described below.
\begin{quote}
	\emph{[Informal description of Algorithm~\ref{ag:DR_MPC}]:} 
	The procedure generates a trajectory from $x_S$ to $x_F$ given sampled-safe-set $\overline{\SSs}$ and an ambiguity set $\overline{\DD}$. The minimum cost-to-go function $\Qo$ is computed for $\SSo$ using~\eqref{eq:Qj}. At time-step $t$, problem~\eqref{eq:DR-RLMPC:main} is solved with $x = x_t$. We denote the optimal solution by 
	\begin{equation}\label{eq:optSol}
		\begin{split}
			x_{\mathrm{vec},t}^{*} &= [x_{t|t}^{*}, \dots , x_{t+K|t}^{*}], 
			\\
			u_{\mathrm{vec},t}^{*} &= [u_{t|t}^{*}, \dots , u_{t+K-1|t}^{*}],
		\end{split}
	\end{equation}
	where we use the notation that $x_{t+k|t}$ represents the prediction made at time $t$ regarding the state at time $t+k$. The control action at time $t$ is set as the first element of $u_{\mathrm{vec},t}^{*}$ (Line~\ref{ln:control}) and it is appended to the trajectory $\utraj$. The state is updated and added to $\xtraj$ in Line~\ref{ln:state}. The procedure moves to the next time step with the updated state as $x_{t+1}$. 
\end{quote}

\begin{algorithm}[htb]
	\SetAlgoLined
	\DontPrintSemicolon
	\SetKwInOut{Input}{Input}
    \SetKwInOut{Output}{Output}
	\SetKwInOut{init}{Initialize}
	\SetKwInOut{giv}{Data}
	\SetKwInOut{params}{Parameter}
	\SetKwProg{Fn}{Function}{:}{}
	\SetKwFunction{FMain}{$\mathtt{DR\_MPC}$}
    \Fn{\FMain{$\SSo, \DDo$}}{
    	\init{%
    		$t\gets0$; $x_0\gets x_S$; $\xtraj \gets[x_0]$, $\utraj \gets[\,\,]$
    	}
    Set $\overline{Q}$ as minimum cost-to-go in $\SSo$ (use~\eqref{eq:Qj})\;
    \While{$x_t\neq x_F$}{
        Solve~\eqref{eq:DR-RLMPC:main} with $x=x_t$ and obtain optimal solutions $x_{\mathrm{vec},t}^{*}$ and $u_{\mathrm{vec},t}^{*}$ (see~\eqref{eq:optSol})\; \label{step:finite-horizon}
        Set $u_t\gets u^{*}_{t|t}$; $\utraj \gets[u, u_t]$ \label{ln:control}\;
        Set $x_{t+1} \! \gets  \! f(x_t,u_t) $; $\xtraj \! \gets \! [x, x_{t+1}]$; $t \! \gets  \! t+1$ \label{ln:state}\;
        }
    \textbf{return} $(\xtraj, \utraj)$
    }
    \textbf{end}
    \caption{Distributionally robust MPC function}
    \label{ag:DR_MPC}
\end{algorithm}

The above explained MPC procedure might not terminate in finite time, thus possibly violating our assumption that all trajectories have finite length. To practically overcome this issue, we terminate the MPC scheme when the state reaches a neighborhood of the equilibrium $x_F$. Later in Section~\ref{sec:one-step}, we discuss conditions under which target can be reached in finite time.

\begin{remark}\longthmtitle{Tractability}
	Note that, if $\XX$ and $\UU$ are convex sets, $g(\cdot,w)$ is convex for every $w \in \WW$, and~\eqref{sys} is a linear system, then the risk constraint in the infinite-horizon problem~\eqref{eq:IHOCP-d} as well as the DR risk constraint in~\eqref{eq:DR-RLMPC:main} are convex. As a result of the latter fact, all points in the convex hull of $\Pis{\overline{\SSs}}$ satisfy the DR risk constraint. Hence, we can replace $\Pis{\overline{\SSs}}$ with its convex hull and define the minimum cost-to-go function using Barycentric functions (see~\cite{UR-FB:17-ifac}) in the problem~\eqref{eq:DR-RLMPC:main} without affecting the safety of the resulting trajectory. By doing so, all constraints in problem~\eqref{eq:DR-RLMPC:main} are convex which eases the computational burden of solving the problem. \oprocend
\end{remark}

\begin{remark}\longthmtitle{Ambiguity sets}\label{re:ambiguity}
	The definition of the ambiguity set in our algorithm is quite general, defined using an arbitrary map $\Db$. Popular choices of data-based ambiguity sets are the ones using distance metrics such as Wasserstein, KL-divergence, $\phi$-divergence or using moment information, see~\cite{HR-SM:19-arXiv} for a survey. The reliability guarantee~\eqref{eq:amb_def-n} for a particular choice of ambiguity set is ensured by concentration of measure results. Each class of ambiguity set comes with its own pros and cons and usually one needs to seek a balance between (a) guaranteed statistical performance, (b) generality of distributions that can be handled, and (c) computational effort for handling the DR constraint in the finite-horizon problem~\eqref{eq:DR-RLMPC:main}. In Section~\ref{sec:comp-trac}, we explore the tractability aspect of the DR constraint for two classes of distance-based ambiguity sets. \oprocend 
\end{remark}

\begin{remark}\longthmtitle{Safety vs cost-performance}\label{re:tradeoff}
	The reliability parameter $\zeta$ in our framework is tunable. Meaning, if one requires high level of safety when exploring the state space, then $\zeta$ can be selected close to unity. In that case, the ambiguity set needs to be large enough to ensure~\eqref{eq:amb_def-n} and so the DR risk constraint turns out to be conservative. Analogously, if cost improvement is the goal, then a low value of $\zeta$ will be sufficient. Note that safety can alternatively be tuned by changing the risk-averseness parameter $\beta$. A lower value of $\beta$ would ensure more safety.  \oprocend
\end{remark}

\begin{remark}\longthmtitle{Time-varying constraint function}
	We note that our procedure can handle time-varying constraint functions, that is, for time-step $t$, the risk-averse constraint~\eqref{eq:IHOCP-d} reads as $\CVaR_{\beta}^\mathbb{P}\left[g_t(x_t,w)\right] \leq \delta$. The only minor change in our algorithm will be in Line~\ref{step:finite-horizon} of the $\drmpc$ scheme. In particular, we need to replace $g$ in the distributionally robust constraint for the predicted state $x_k$ in~\eqref{eq:DR-RLMPC:main} with $g_{k+t}$ provided that the finite-horizon problem~\eqref{eq:DR-RLMPC:main} is solved at time-step $t$. The theoretical guarantees presented in the next section also hold for this general case provided that each $g_t$ has the same properties as $g$. \oprocend
\end{remark}

\section{Properties of DR-RC-Iterative-MPC}\label{sec:properties} %

We first establish recursive feasibility of our iterative procedure given in Algorithm~\ref{ag:DR_iteration}. We also state the safety guarantee with which each trajectory is generated.

\begin{proposition}\label{prop:t-t+1}\longthmtitle{Safety and recursive feasibility of DR-RC-Iterative-MPC}
	Let Assumption~\ref{assump:safeset} hold. Then, at each iteration $j\ge1$ and time-step $t \ge 0$, the finite-horizon problem~\eqref{eq:DR-RLMPC:main} with $x = x_t^j$, $\SSo = \SSs^{j-1}$, and $\DDo = \DD^{j-1}$ solved in the DR-RC-Iterative-MPC scheme is feasible. Further, the generated trajectory $(\xtraj^j,\utraj^j)$ is $(\zeta,\Pb^{\Data^{j-1}})$-safe.
\end{proposition}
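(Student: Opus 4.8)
The plan is to argue by induction on the iteration index $j$, proving the stronger statement that at the start of iteration $j$ the sampled-safe-set $\SSs^{j-1}$ is nonempty, contains the states of at least one complete trajectory from $x_S$ to $x_F$, and that every state $x$ appearing in $\SSs^{j-1}$ satisfies the DR risk constraint $\sup_{\mu\in\DD^{j-1}}\CVaR_\beta^\mu[g(x,w)]\le\delta$. For the base case $j=1$, Assumption~\ref{assump:safeset} supplies a robustly safe finite-length trajectory $\xtraj^0$ from $x_S$ to $x_F$ in $\SSs^0$; since $g(x,w)\le\delta$ for all $w\in\WW$ and all $x\in\Pis{\SSs^0}$, monotonicity of $\CVaR$ (it is bounded above by the essential supremum) gives $\CVaR_\beta^\mu[g(x,w)]\le\delta$ for \emph{every} $\mu$ supported on $\WW$, hence in particular for all $\mu\in\DD^0$. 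So $\SSs^0$ meets the inductive hypothesis.

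Next I would show that, given the inductive hypothesis for $\SSs^{j-1}$ and $\DD^{j-1}$, the finite-horizon problem~\eqref{eq:DR-RLMPC:main} is feasible at every time-step $t$ of the $\drmpc$ call in iteration $j$, by exhibiting an explicit feasible candidate, exactly as in the learning-MPC argument of~\cite{UR-FB:17-tac}. At $t=0$ the current state is $x_S$, which by the inductive hypothesis belongs to $\Pis{\SSs^{j-1}}$ (it is the start of a stored trajectory); follow that stored trajectory for $K-1$ steps and terminate at the state $K$ steps along it, which again lies in $\Pis{\SSs^{j-1}}$, giving a point satisfying every constraint of~\eqref{eq:DR-RLMPC:main}, including the terminal and DR risk constraints (the latter because every stored state satisfies it by hypothesis). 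The standard induction-in-$t$ step then works: if~\eqref{eq:DR-RLMPC:main} is feasible at $t$ with optimizer $x^*_{\mathrm{vec},t}$, then at $t+1$ the shifted sequence $[x^*_{t+1|t},\dots,x^*_{t+K|t}]$ is feasible after appending one more step of the stored trajectory through the terminal state $x^*_{t+K|t}\in\Pis{\SSs^{j-1}}$; the appended segment stays in $\XX\times\UU$ and satisfies the DR risk constraint because its states are in $\Pis{\SSs^{j-1}}$. This establishes feasibility of~\eqref{eq:DR-RLMPC:main} throughout iteration $j$, so $\drmpc$ produces a well-defined trajectory $(\xtraj^j,\utraj^j)$; since that trajectory satisfies the DR risk constraint for all $\mu\in\DD^{j-1}$ and $\DD^{j-1}$ is $(\zeta,\Pb^{\abs{\Data^{j-1}}})$-reliable by~\eqref{eq:amb_def-n}, the $(\zeta,\Pb^{\abs{\Data^{j-1}}})$-safety claim~\eqref{eq:traj-safety} follows immediately from monotonicity of probability.

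Finally I would close the induction by checking that $\SSs^{j}$ produced in Lines~\ref{ln:uns-traj}--\ref{ln:ss-update} again satisfies the inductive hypothesis relative to $\DD^{j}$: by construction $\SSs^j$ retains only those trajectories all of whose states satisfy $\sup_{\mu\in\DD^{j}}\CVaR_\beta^\mu[g(x,w)]\le\delta$, so the DR-safety condition is immediate; the nontrivial point is nonemptiness, i.e.\ that at least one full trajectory survives the pruning. Here I would observe that the newly generated trajectory $\xtraj^j$ ends at $x_F$ and the segment of it lying in $\SSs^0$'s robust tube — or more simply the original robust trajectory $\xtraj^0$ itself, whose states satisfy $g(x,w)\le\delta$ for \emph{all} $w\in\WW$ and hence the DR risk constraint for any ambiguity set — is never removed, so $\SSs^j$ always contains a complete $x_S$-to-$x_F$ trajectory. (One must be a little careful that $\xtraj^0$'s trajectory index stays in $\II^{j}$ for all $j$; this holds precisely because its states are robustly safe, so Line~\ref{ln:uns-traj} never flags it.) The main obstacle I anticipate is exactly this bookkeeping around the pruning map $\mathbb{S}^j$: ensuring that the terminal-constraint set never becomes empty and that "the stored trajectory through the current state" used in the feasibility candidate is genuinely available at each $t$; everything else is a routine adaptation of the learning-MPC recursive-feasibility argument combined with the elementary fact that $\CVaR_\beta^\mu[Z]\le\operatorname{ess\,sup}Z$ for every $\mu$.
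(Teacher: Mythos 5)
Your proposal is correct and follows essentially the same route as the paper's proof: feasibility at $t=0$ from the first $K$ elements of the robust initial trajectory, induction in $t$ via the shifted optimal sequence appended with one step of the stored trajectory through the terminal state, and safety from the DR constraint combined with the $(\zeta,\Pb^{\abs{\Data^{j-1}}})$-reliability of the ambiguity set. Your explicit justification that $\xtraj^0$ survives every pruning step (because $\CVaR_\beta^\mu[g(x,w)]\le\sup_{w\in\WW}g(x,w)\le\delta$ for any $\mu$ supported on $\WW$) is exactly the reason behind the paper's unelaborated claim that $\SSs^0\subseteq\SSs^j$ for all $j\ge1$, so it is a welcome filling-in of detail rather than a deviation.
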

\begin{proof}
	By Assumption~\ref{assump:safeset}, $\SSs^0$ contains a finite-length robustly safe trajectory from $x_S$ to $x_F$, denoted as, %
	\begin{align*}
		\xtraj^0 := [x_0^0, x_1^0, \dots, x_{T_0}^0], \, \text{ and } \, 
		\utraj^0 := [u_0^0, u_1^0, \dots, u_{T_0 - 1}^0],
	\end{align*} 
	where $T_0$ is the length of this trajectory. The update step for the sampled-safe-set (Line~\ref{ln:safe} and~\ref{ln:ss-update}) implies that 
		$\SSs^0 \subseteq \SSs^j$, for all $j\geq 1$. 
	Thus, at each iteration $j\geq1$ and $t = 0$, the first $K$ elements of $(\xtraj^0,\utraj^0)$ are valid feasible solutions to~\eqref{eq:DR-RLMPC:main}, where $x = x_S$ and $(\overline{\DD},\overline{\SSs}) = (\DD^{j-1},\SSs^{j-1})$.
	Our next step is to show that, for each iteration, feasibility at time $t$ implies feasibility at time $t+1$. The proof then follows by induction.
	
	Assume that the optimization problem~\eqref{eq:DR-RLMPC:main} is feasible at iteration $j$ and time $t$ for $x = x_t^j$. Denote the optimizer as
	\begin{equation}\label{eq:optSol-j}
			\begin{split}
				x_{\mathrm{vec},t}^{*,j} &= [x_{t|t}^{*,j}, \dots , x_{t+K|t}^{*,j}],
				\\
				u_{\mathrm{vec},t}^{*,j} &= [u_{t|t}^{*,j}, \dots , u_{t+K-1|t}^{*,j}].
			\end{split}
	\end{equation}
	By applying the first element of $u_{\mathrm{vec},t}^{*,j}$ to system~\eqref{sys}, the new state is determined as $x_{t+1}^j = f(x_{t}^j, u_{t|t}^{*,j})$. Moreover, we have $x_{t+1}^j = x_{t+1|t}^{*,j}$. 
	Due to constraints in~\eqref{eq:DR-RLMPC:main}, we have $x_{t+K|t}^{*,j} \in \SSs^{j-1}$. Recall that due to Line~\ref{ln:ss-update} of the algorithm, $\SSs^{j-1}$ contains trajectories that are $(\zeta,\Pb^{\Data^{j}})$-safe. Thus, there exists a trajectory starting at $x_{t+K|t}^{*,j}$ given as $\tilde{\xtraj} =  [x_{t+K|t}^{*,j}, \tilde{x}_{K+1}, \dots, \tilde{x}_{K+T}]$, $\tilde{\utraj} = [\tilde{u}_{K}, \tilde{u}_{K+1}, \dots, \tilde{u}_{K+T-1}]$, 
	such that $\tilde{x}_{K+T} = x_F$ and all points in $\tilde{\xtraj}$ are in $\Pis{\SSs^{j-1}}$. Using the above trajectory, the following one is a feasible solution to~\eqref{eq:DR-RLMPC:main} for time-step $t+1$, that is, when $x=x_{t+1}^j$: 
	\begin{align*}
		& [x_{t+1|t}^{*,j}, x_{t+2|t}^{*,j}, \dots, x_{t+K|t}^{*,j}, \tilde{x}_{K+1}], 
		\\
		& [u_{t+1|t}^{*,j}, u_{t+2|t}^{*,j}, \dots, u_{t+K-1|t}^{*,j}, \tilde{u}_{K}].
	\end{align*}
	This completes the proof of the first part. The safety of the trajectory $(\xtraj^j,\utraj^j)$ follows from the constraint in~\eqref{eq:DR-RLMPC:main} and the reliability assumption on the ambiguity set~\eqref{eq:amb_def-n}.
\end{proof}

Next, we show that each trajectory generated by Algorithm~\ref{ag:DR_MPC} converges asymptotically to $x_F$.
\begin{proposition}\label{pr:attr_xf}\longthmtitle{Convergence of $\drmpc$}
	Let Assumption~\ref{assump:safeset} hold. Then, for each iteration $j \ge 1$ of the DR-RC-Iterative-MPC procedure, the trajectory $(\xtraj^j,\utraj^j)$ generated by $\drmpc$ satisfies $x^j_t \to x_F$ as $t \to \infty$. 
\end{proposition}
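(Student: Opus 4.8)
The plan is to use the optimal value of the finite-horizon problem~\eqref{eq:DR-RLMPC:main} as a Lyapunov-like function along the trajectory generated in a fixed iteration $j \ge 1$. Write $V_t := \mathcal{J}_{(\SSs^{j-1},\DD^{j-1})}(x_t^j)$ for the optimal value of~\eqref{eq:DR-RLMPC:main} solved at time-step $t$ (with $x = x_t^j$). By Proposition~\ref{prop:t-t+1} this is finite for every $t \ge 0$, and it is nonnegative because the stage cost $r$ and the minimum cost-to-go $Q^{j-1}$ are nonnegative (the latter being, by~\eqref{eq:Qj}, a minimum of sums of nonnegative stage costs, and finite on $\Pis{\SSs^{j-1}}$ as enforced by the terminal constraint). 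Denoting the optimizer at time $t$ as in~\eqref{eq:optSol-j}, we have $x_t^j = x_{t|t}^{*,j}$, $u_t^j = u_{t|t}^{*,j}$ (Line~\ref{ln:control} of Algorithm~\ref{ag:DR_MPC}), and $V_t = \sum_{k=0}^{K-1} r(x_{t+k|t}^{*,j}, u_{t+k|t}^{*,j}) + Q^{j-1}(x_{t+K|t}^{*,j})$.

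The central step is the descent inequality $V_{t+1} \le V_t - r(x_t^j, u_t^j)$. First I would revisit the feasible solution for time-step $t+1$ built in the proof of Proposition~\ref{prop:t-t+1}, but with the tail trajectory $\tilde\xtraj$ chosen as the continuation of a trajectory in $\SSs^{j-1}$ that attains $Q^{j-1}(x_{t+K|t}^{*,j})$. Concretely, if $x_{t+K|t}^{*,j} = x_s^i$ with $(i, x_s^i, \costgo{s}{\infty}^i) \in \SSs^{j-1}$ and $\costgo{s}{\infty}^i = Q^{j-1}(x_{t+K|t}^{*,j})$, set $\tilde u_K := u_s^i$ and $\tilde x_{K+1} := x_{s+1}^i$; then $(i, x_{s+1}^i, \costgo{s+1}{\infty}^i) \in \SSs^{j-1}$, so $Q^{j-1}(\tilde x_{K+1}) \le \costgo{s+1}{\infty}^i$, and together with $\costgo{s}{\infty}^i = r(x_s^i, u_s^i) + \costgo{s+1}{\infty}^i$ this yields $r(x_{t+K|t}^{*,j}, \tilde u_K) + Q^{j-1}(\tilde x_{K+1}) \le Q^{j-1}(x_{t+K|t}^{*,j})$ (the case $x_{t+K|t}^{*,j} = x_F$ is trivial with $\tilde u_K = 0$, $\tilde x_{K+1} = x_F$). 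The cost of this feasible solution at $t+1$ then telescopes to at most $\sum_{k=1}^{K-1} r(x_{t+k|t}^{*,j}, u_{t+k|t}^{*,j}) + Q^{j-1}(x_{t+K|t}^{*,j}) = V_t - r(x_t^j, u_t^j)$, and since $V_{t+1}$ is the minimum over all feasible solutions, the claimed inequality follows.

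The rest is routine. Being nonincreasing and bounded below by $0$, the sequence $(V_t)_{t\ge0}$ converges, so the descent inequality forces $r(x_t^j, u_t^j) \to 0$ as $t \to \infty$. To conclude $x_t^j \to x_F$: if not, some subsequence satisfies $\norm{x_{t_k}^j - x_F} \ge \eps$ for a fixed $\eps > 0$; by compactness of $\XX \times \UU$ pass to a further subsequence with $(x_{t_k}^j, u_{t_k}^j) \to (\bar x, \bar u)$, where necessarily $(\bar x, \bar u) \ne (x_F, 0)$, and continuity of $r$ together with~\eqref{eq:st-cost} gives $r(x_{t_k}^j, u_{t_k}^j) \to r(\bar x, \bar u) > 0$, contradicting $r(x_t^j, u_t^j) \to 0$.

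The main obstacle is the descent step — specifically, checking that $Q^{j-1}$ acts as a legitimate terminal cost, i.e. that peeling off one stage along the stored minimizing trajectory decreases $Q^{j-1}$ by exactly that stage cost, so that the terminal term in the feasible cost at $t+1$ can be absorbed into $V_t$; some care is also needed for the degenerate case where the predicted terminal state is already $x_F$. The convergence and limit-identification steps are the standard MPC argument and use only compactness of $\XX\times\UU$ and continuity of $r$.
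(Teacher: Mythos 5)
Your proof is correct and follows essentially the same route as the paper's: the Lyapunov candidate is the optimal value $\JJ_{(\SSs^{j-1},\DD^{j-1})}$, and the descent inequality $V_{t+1}\le V_t - r(x_t^j,u_t^j)$ is obtained by the same one-step peeling of $Q^{j-1}$ along the stored minimizing trajectory combined with the shifted feasible solution from the recursive-feasibility argument. The only difference is in the routine concluding step: the paper invokes the general attractivity result (Proposition~\ref{prop:xbar_conv}), whereas you argue directly that $r(x_t^j,u_t^j)\to 0$ and conclude via compactness of $\XX\times\UU$, continuity of $r$, and~\eqref{eq:st-cost} --- which incidentally sidesteps the need to exhibit a continuous, state-only, positive-definite lower bound on the decrement.
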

\begin{proof}
	To show the result we will use Proposition~\ref{prop:xbar_conv}. Note that the trajectory belongs to the compact set $\XX$ and is generated by the discrete-time system $x^j_{t+1} = f(x_t^{j}, u^{*,j}_{\mathrm{vec},t})$, where $u^{*,j}_{\mathrm{vec},t}$ is the first component of the control input sequence obtained by solving the finite-horizon optimal control problem~\eqref{eq:DR-RLMPC:main} with $x = x_t^j$ (see~\eqref{eq:optSol-j} for more details). Note that the control input $u^{*,j}_{\mathrm{vec},t}$ is a function of the state $x_t^j$ only and so, the dynamics can be implicitly written in the form~\eqref{eq:gen-sys}. Next, consider the function $\map{\JJ_{(\SSs^{j-1},\DD^{j-1})}}{\XX}{\realnonnegative}$ obtained by replacing $(\SSo,\DDo)$ with $(\SSs^{j-1},\DD^{j-1})$ in the definition of $\JJ_{(\SSo,\DDo)}$ given in~\eqref{eq:DR-RLMPC:main}. For brevity, we use the shorthand $\JJ^{j-1}$. Our aim is to show that $\JJ^{j-1}$ acts as the Lyapunov candidate $V$. First note that $\JJ^{j-1}(x_F) = 0$ as one can apply zero input at the equilibrium and stay there, thus accumulating no cost. For any $x \not = x_F$, we have $\JJ^{j-1}(x) \ge \min_{u \in \UU} r(x,u) > 0$. Thus, in our case $x_F$ acts as the point $x^*$ in Proposition~\ref{prop:xbar_conv}. To conclude the proof, we show that $\JJ^{j-1}$ satisfies~\eqref{eq:V-lyap}. Pick any $x \in \XX$ and let $(\xtraj^*,\utraj^*) \in \XX^{K+1} \times \UU^K$ be the optimal trajectory obtained by solving the finite-horizon problem~\eqref{eq:DR-RLMPC:main} with the constraint that the first state in the trajectory is $x^*_0 = x$. We have
	\begin{align*}%
		\JJ^{j-1}(x_0^*) = r(x^*_0,u^*_0) + \sum_{k=1}^{K-1} r(x^*_k,u^*_k) + Q^{j-1}(x^*_{K}).
	\end{align*}
    By definition of $Q^{j-1}$, there exists a trajectory $j^* \in \until{j-1}$ and a time $t^*$ such that $x^*_{K} = x^{j^*}_{t^*}$ and 
    	$Q^{j-1}(x^*_{K}) = \sum_{t=t^*}^\infty r(x^{j^*}_t,u^{j^*}_t)$.
	Substituting this expression in the above relation for $\JJ^{j-1}$, gives 
	\begin{align}
		\JJ^{j-1}(x_0^*) & = r(x^*_0,u^*_0) + \sum_{k=1}^{K-1} r(x^*_k,u^*_k) + \sum_{t=t^*}^\infty r(x^{j^*}_t,u^{j^*}_t) \notag
		\\
		& = r(x^*_0,u^*_0) +  \sum_{k=1}^{K-1} r(x^*_k,u^*_k) + r(x^{j^*}_{t^*},u^{j^*}_{t^*}) \notag
		\\
		& \qquad \qquad +  \sum_{t=t^*+1}^\infty r(x^{j^*}_t,u^{j^*}_t) \label{eq:J-ineq-2}
	\end{align}
	Note that $x_{t^*+1}^{j^*}$ belongs to the set $\SSs^{j-1}$ and so 
	\begin{align}\label{eq:Q-bound}
		Q^{j-1}(x_{t^*+1}^{j^*}) \le \sum_{t=t^*+1}^\infty r(x^{j^*}_t,u^{j^*}_t),
	\end{align}
	and the finite-horizon trajectory $\xtraj^+ = [x^*_{1}, \dots, x^*_{K}, x^{j^*}_{t^*+1}]$, $\utraj^+ = [u^*_{1}, \dots, u^{j^*}_{t^*}]$ is a feasible solution for~\eqref{eq:DR-RLMPC:main} with ${x = x_1^*}$. 
	Using optimality, we have
	\begin{align*}
		& \JJ^{j-1}(x^*_1) \le \sum_{k=1}^{K-1} r(x^*_k,u^*_k) + r(x^*_{K},u^{j^*}_{t^*}) + Q^{j-1}(x^{j^*}_{t^* +1})
		\\
		& \quad \le \sum_{k=1}^{K-1} r(x^*_k,u^*_k) + r(x^{j^*}_{t^*},u^{j^*}_{t^*}) + \sum_{t=t^*+1}^\infty r(x^{j^*}_t,u^{j^*}_t),
	\end{align*}
	where we used~\eqref{eq:Q-bound} in the above inequality. 
	Using the above condition in~\eqref{eq:J-ineq-2} yields 
		$\JJ^{j-1}(x_0^*) \ge \JJ^{j-1}(x^*_1) + r(x^*_0,u^*_0)$.
	Since $r$ is a continuous function satisfying~\eqref{eq:st-cost}, this establishes the inequality~\eqref{eq:V-lyap} and so completes the proof. 
\end{proof}

While the above result establishes asymptotic convergence of the algorithm, it does not ensure that the trajectories are of finite length. The latter is a key assumption in our setup. Therefore, the following result identifies conditions under which trajectories reach any neighborhood of the target in finite steps. The proof is a direct consequence of~\cite[Theorem 2]{POMS-DQM-JBR:1999}.

\begin{lemma}\longthmtitle{Finite-time convergence of $\drmpc$ to a neighborhood of $x_F$}\label{le:finite-nbd}
	Assume that there exists a class $\mathcal{K}$ function $\rf$
	 such that $r(x, u)\ge\mathfrak{r}(\norm{x-x_F})$ for all $(x,u) \in \XX \times \UU$.
	 Then, for every $\epsilon > 0$ and for every iteration $j\ge1$ of the DR-RC-Iterative-MPC procedure generated by $\drmpc$, there exists a finite $\overline{t} \in\naturalnumbers_{\ge 1}$ such that $\norm{x_{\overline{t}}^j-x_F}\le\epsilon$.
\end{lemma}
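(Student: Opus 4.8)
The plan is to treat the optimal value of the finite-horizon problem solved by $\drmpc$ as a Lyapunov-type function and exploit that the closed-loop stage cost is summable. Fix an iteration $j\ge 1$ and write $V^{j-1} := \JJ_{(\SSs^{j-1},\DD^{j-1})}$ for the optimal value map of~\eqref{eq:DR-RLMPC:main} with $\overline{Q}=Q^{j-1}$. Since $r\ge 0$ and $Q^{j-1}\ge 0$, we have $V^{j-1}\ge 0$ on $\XX$, and by Proposition~\ref{prop:t-t+1} the finite-horizon problem is feasible at every closed-loop state $x_t^j$, so $V^{j-1}(x_t^j)<\infty$ for all $t$; in particular $V^{j-1}(x_S)<\infty$. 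Throughout the call to $\drmpc$ in iteration $j$ the data $(\SSs^{j-1},\DD^{j-1},Q^{j-1})$ are frozen, so $V^{j-1}$ is a single time-invariant function along this trajectory.

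First I would recover the one-step decrease $V^{j-1}(x_{t+1}^j)\le V^{j-1}(x_t^j)-r(x_t^j,u_t^j)$. This is exactly the inequality established inside the proof of Proposition~\ref{pr:attr_xf} (the chain culminating in $\JJ^{j-1}(x_0^*)\ge\JJ^{j-1}(x_1^*)+r(x_0^*,u_0^*)$), specialized to $x_0^*=x_t^j$ and $u_0^*=u_t^j$, whose validity rests on the shift-and-append feasible solution guaranteed by recursive feasibility. Telescoping this over $t=0,\dots,T-1$ and using $V^{j-1}\ge 0$ gives $\sum_{t=0}^{T-1}r(x_t^j,u_t^j)\le V^{j-1}(x_S)-V^{j-1}(x_T^j)\le V^{j-1}(x_S)$ for every $T$, hence $\sum_{t=0}^{\infty}r(x_t^j,u_t^j)\le V^{j-1}(x_S)<\infty$; in particular $r(x_t^j,u_t^j)\to 0$.

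Finally I would bring in the class-$\KK$ lower bound to localize the state. Fix $\epsilon>0$. If the closed-loop trajectory reaches $x_F$ at some finite time, that time serves as $\overline{t}$ and we are done. Otherwise, suppose for contradiction that $\norm{x_t^j-x_F}>\epsilon$ for all $t\ge 1$; then monotonicity of $\rf$ yields $r(x_t^j,u_t^j)\ge\rf(\norm{x_t^j-x_F})\ge\rf(\epsilon)>0$ for all $t\ge 1$, so $\sum_t r(x_t^j,u_t^j)=+\infty$, contradicting the previous paragraph. Hence there is a finite $\overline{t}\in\naturalnumbers_{\ge 1}$ with $\norm{x_{\overline{t}}^j-x_F}\le\epsilon$. (Alternatively, since $\rf$ is class $\KK$ it admits a continuous inverse near $0$, so $r(x_t^j,u_t^j)\to 0$ already forces $x_t^j\to x_F$, recovering Proposition~\ref{pr:attr_xf}; the contradiction argument is just the quantitative shortcut that makes the class-$\KK$ hypothesis do the work.)

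The only delicate point is making sure the one-step decrease holds along the \emph{actual} closed-loop trajectory, i.e. that the comparison ``predicted-trajectory plus appended stored trajectory'' stays feasible for the DR risk, terminal, and state/input constraints of~\eqref{eq:DR-RLMPC:main}; but this is precisely what Proposition~\ref{prop:t-t+1} and the computation inside the proof of Proposition~\ref{pr:attr_xf} already supply, so no new argument is needed. The whole construction is an instance of the feasibility-implies-stability mechanism of Scokaert, Mayne and Rawlings~\cite{POMS-DQM-JBR:1999}, which is why the statement is attributed there.
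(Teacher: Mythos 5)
Your proof is correct and follows essentially the same route the paper intends: the paper disposes of this lemma by citing~\cite[Theorem 2]{POMS-DQM-JBR:1999}, and your argument is precisely an unpacking of that feasibility-implies-stability mechanism, reusing the one-step decrease $\JJ^{j-1}(x^j_{t+1})\le\JJ^{j-1}(x^j_t)-r(x^j_t,u^j_t)$ already established in the proof of Proposition~\ref{pr:attr_xf}, telescoping it to get summability of the closed-loop stage cost, and invoking the class-$\KK$ lower bound $r(x,u)\ge\rf(\norm{x-x_F})$ to rule out the trajectory staying outside the $\epsilon$-ball. No gaps; the only substantive ingredient (validity of the decrease along the actual closed-loop trajectory) is correctly attributed to recursive feasibility and the shift-and-append construction.
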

In the subsequent section, we provide a set of stricter conditions as compared to the above result that ensure convergence to the target in finite time. For practical purposes, the above result helps to terminate each iteration in a reasonable number of time steps.

We now shift our attention to the 
performance of our algorithm across iterations. We state conditions under which the cost of trajectory decreases in subsequent iterations.

	\newcommand{\JJov}{\overline{\JJ}}
	\begin{proposition}\longthmtitle{Guarantee across iterations for DR-RC-Iterative-MPC}\label{pr:across-iter}
		For the DR-RC-Iterative-MPC procedure, if $\DD^{j} \subset \DD^{j-1}$ for some iteration $j \ge 1$, then we have 
		\begin{align}\label{eq:SS-over-iter}
			\SSs^j = \SSs^{j-1} \cup \setr{(j,x_t^{j},\costgo{t}{\infty}^{j})}_{t=1}^{T_j}.
		\end{align}
	As a consequence, $\SSs^{j-1}\subseteq\SSs^j$. In addition, if the function $\JJ_{(\SSs^{j},\DD^{j})}$ is continuous at $x_F$, then   $\costgo{0}{\infty}^{j+1} \le \costgo{0}{\infty}^{j}$. 
	\end{proposition}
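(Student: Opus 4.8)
The plan is to split the statement into its three parts and attack them in sequence: (i) the set identity~\eqref{eq:SS-over-iter}, (ii) the monotonicity $\SSs^{j-1}\subseteq\SSs^j$, which is immediate from (i), and (iii) the cost decrease $\costgo{0}{\infty}^{j+1} \le \costgo{0}{\infty}^{j}$ under continuity of $\JJ_{(\SSs^j,\DD^j)}$ at $x_F$.

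For part (i), I would trace through Lines~\ref{ln:uns-traj}--\ref{ln:ss-update} of Algorithm~\ref{ag:DR_iteration}. The key observation is that the hypothesis $\DD^j \subset \DD^{j-1}$ makes the DR risk constraint \emph{weaker} at iteration $j$ than it was at iteration $j-1$: for any state $x$, $\sup_{\mu\in\DD^j}\CVaR_\beta^\mu[g(x,w)] \le \sup_{\mu\in\DD^{j-1}}\CVaR_\beta^\mu[g(x,w)]$. Every state in $\SSs^{j-1}$ (before the Line~\ref{ln:safe} augmentation) was retained precisely because it satisfied the $\DD^{j-1}$-DR constraint, hence it also satisfies the $\DD^j$-DR constraint; and every state $x_t^j$ on the newly added trajectory satisfies the $\DD^{j-1}$-DR constraint by construction in~\eqref{eq:DR-RLMPC:main} (Proposition~\ref{prop:t-t+1}), hence also the $\DD^j$-DR one. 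Therefore $\Unsafe^j = \emptyset$, so $\II^j = \II^{j-1}\cup\{j\}$ and Line~\ref{ln:ss-update} drops nothing, giving exactly~\eqref{eq:SS-over-iter}. Part (ii) then follows trivially since the right-hand side of~\eqref{eq:SS-over-iter} contains $\SSs^{j-1}$.

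For part (iii), the strategy mirrors the argument in the proof of Proposition~\ref{pr:attr_xf}: I want to compare the cost of iteration $j+1$ with that of iteration $j$ by exhibiting, at each time-step of iteration $j+1$, a feasible solution to~\eqref{eq:DR-RLMPC:main} built from the stored iteration-$j$ trajectory. Concretely, since $\SSs^{j-1}\subseteq\SSs^j$ and the whole iteration-$j$ trajectory $\setr{(j,x_t^j,\costgo{t}{\infty}^j)}_{t=1}^{T_j}$ lies in $\SSs^j$ by~\eqref{eq:SS-over-iter}, the states $x_1^j,\dots,x_{T_j}^j=x_F$ are all available as terminal points for the MPC scheme at iteration $j+1$, and the DR constraint at iteration $j+1$ (using $\DD^j$) is satisfied along them. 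Using $Q^j(x_t^j)\le \costgo{t}{\infty}^j$ together with the standard telescoping/shifting argument, one obtains $\JJ_{(\SSs^j,\DD^j)}(x_t^j) \le \costgo{t}{\infty}^j$ at each $t$, and summing the per-step cost decrease $r(x_0^{j+1},u_0^{j+1}) \le \JJ_{(\SSs^j,\DD^j)}(x_0^{j+1}) - \JJ_{(\SSs^j,\DD^j)}(x_1^{j+1})$ along the iteration-$(j+1)$ trajectory telescopes to $\costgo{0}{\infty}^{j+1} \le \JJ_{(\SSs^j,\DD^j)}(x_S) \le \costgo{0}{\infty}^j$, where the last inequality uses that the iteration-$j$ trajectory itself is feasible for the iteration-$(j+1)$ problem at $x = x_S$.

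The main obstacle is the role of the continuity hypothesis: the telescoping sum $\sum_{t=0}^\infty \bigl(\JJ_{(\SSs^j,\DD^j)}(x_t^{j+1}) - \JJ_{(\SSs^j,\DD^j)}(x_{t+1}^{j+1})\bigr)$ only collapses to $\JJ_{(\SSs^j,\DD^j)}(x_S) - \lim_{t\to\infty}\JJ_{(\SSs^j,\DD^j)}(x_t^{j+1})$, and by Proposition~\ref{pr:attr_xf} we have $x_t^{j+1}\to x_F$, so I need $\JJ_{(\SSs^j,\DD^j)}(x_t^{j+1}) \to \JJ_{(\SSs^j,\DD^j)}(x_F) = 0$ to kill the boundary term — this is exactly where continuity of $\JJ_{(\SSs^j,\DD^j)}$ at $x_F$ enters, and it is the step that needs the most care (in particular, arguing $\JJ_{(\SSs^j,\DD^j)}(x_F)=0$, which holds since the zero input keeps the state at $x_F\in\Pis{\SSs^j}$ with zero stage cost). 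A secondary subtlety is making the "feasible shifted trajectory" construction precise when $T_j$ is small relative to the horizon $K$, handled exactly as in Proposition~\ref{prop:t-t+1} by padding with the stored tail of the iteration-$j$ trajectory.
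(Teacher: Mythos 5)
Your proposal is correct and follows essentially the same route as the paper's proof: the upper bound $\JJ_{(\SSs^j,\DD^j)}(x_S)\le \costgo{0}{\infty}^{j}$ obtained by exhibiting the first $K$ steps of trajectory $j$ (together with the terminal cost $Q^j$) as a feasible solution of the iteration-$(j+1)$ problem, which is where $\DD^j\subset\DD^{j-1}$ enters, and the lower bound obtained by telescoping the per-step decrease from Proposition~\ref{pr:attr_xf} along trajectory $j+1$, with the boundary term eliminated by continuity of $\JJ_{(\SSs^j,\DD^j)}$ at $x_F$. Your argument for the set identity~\eqref{eq:SS-over-iter} --- showing $\Unsafe^j=\emptyset$ because shrinking the ambiguity set only weakens the DR check, both for states already in $\SSs^{j-1}$ and for the newly added ones --- is in fact spelled out more explicitly than in the paper, which essentially treats $\SSs^{j-1}\subseteq\SSs^j$ as given.
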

\begin{proof}
	For convenience, we denote the function $\JJ_{(\SSs^{j},\DD^{j})}$ with $\JJov$. Let $(\xtraj^{j^*},\utraj^{j^*}),(\xtraj^{j+1^*},\utraj^{j+1^*}) \subset \XX \times \UU$ be the optimal trajectories obtained from Algorithm~\ref{ag:DR_MPC} at iterations $j$ and $j+1$, respectively. Based on~\eqref{eq:to-go} we have
	\begin{align}
			\costgo{0}{\infty}^{j} &:=  \sum_{k=0}^\infty r(x_k^{j^*}, u_k^{j^*}) \notag 
			\\
			&= \sum_{k=0}^{K-1} r(x_k^{j^*}, u_k^{j^*}) + \sum_{k=K}^\infty r(x_k^{j^*}, u_k^{j^*}). \label{eq:cost-ineq-j}
	\end{align}
	Due to the terminal constraint, we have $x_{K}^{j^*} \in \SSs^{j-1}$. Combining this with the assumption $\SSs^{j-1}\subseteq\SSs^{j}$, we have $x^{j^*}_{K}\in\Pis{\SSs^{j}}$. As a consequence, using the definition of $Q^j$, we have
	\begin{align*}
		Q^j(x_{K}^{j^*}) \le \sum_{k=K}^\infty r(x_k^{j^*}, u_k^{j^*}).
	\end{align*}
	Using the above in~\eqref{eq:cost-ineq-j}, we obtain %
\begin{align}\label{eq:jj_upper-pre}
	\costgo{0}{\infty}^{j} &\geq \sum_{k=0}^{K-1} r(x_k^{j^*}, u_k^{j^*}) +Q^{j}(x^{j^*}_{K})%
\end{align}

Note the point $([x_0^{j^*},x_1^{j^*},\dots, x_K^{j^*}],[u_0^{j^*},u_1^{j^*},\dots, u_{K-1}^{j^*}])$ form a feasible solution for the finite-horizon problem~\eqref{eq:DR-RLMPC:main} with $(\SSo,\DDo) = (\SSs^j,\DD^j)$. This is due to the assumption $\DD^j \subset \DD^{j-1}$ which ensures that these points satisfy the DR risk constraint for the $(j+1)$-th iteration. Using this fact and the definition of $\JJov$ in~\eqref{eq:jj_upper-pre},  we obtain
\begin{align}\label{eq:jj_upper}
	\costgo{0}{\infty}^{j} \geq \JJov(x_S), %
\end{align}
Thus, we have $\JJov(x_S)\leq \costgo{0}{\infty}^{j}$. Next, we find a lower bound for $\JJov(x_S)$.

From the proof of Proposition~\ref{pr:attr_xf} we have
	\begin{align*}
	\JJov(x_S) &\ge r(x^{j+1^*}_0,u^{j+1^*}_0) + \JJov(x^{j+1^*}_1)
	\\
	&\ge r(x^{j+1^*}_0,u^{j+1^*}_0) + r(x^{j+1^*}_1,u^{j+1^*}_1) + \JJov(x^{j+1^*}_2)
	\\
	&\ge \lim_{T\to\infty}\left[\sum_{t=0}^{T-1}r(x^{j+1^*}_t, u^{j+1^*}_t) + \JJov(x^{j+1^*}_T)\right].
\end{align*}
In Proposition~\ref{pr:attr_xf} we proved that for all $j\ge1$ $\lim_{T\to\infty}x^j_T = x_F$. Using this and the assumption that $\JJov$ is continuous at $x_F$, we obtain $\lim_{T\to\infty}\JJov(x^*_T) = 0$. Thus, we have 
\begin{align}\label{eq:jj_lower}
	\JJov(x_S)\ge\sum_{t=0}^{\infty}r(x^{j+1^*}_t, u^{j+1^*}_t) = \costgo{0}{\infty}^{j+1}.
\end{align}
According to~\eqref{eq:jj_upper}~and~\eqref{eq:jj_lower} we have $\costgo{0}{\infty}^{j+1}\le\JJov(x_S)\le\costgo{0}{\infty}^{j}$ and as a result, $\costgo{0}{\infty}^{j+1} \le \costgo{0}{\infty}^{j}$. This completes the proof.
\end{proof}
 The assumption that $\DD^j \subset \DD^{j-1}$ is a difficult one to impose, in general. In future, we would like to explore scenarios where this can be ensured at least with high probability if not almost surely. The above result also requires the optimal finite-horizon cost $\JJ_{(\SSs^{j},\DD^{j})}$ to be continuous at the target point $x_F$. While this property is in general true for standard MPC formulations, it is not straightforward in our scheme as the terminal set is non-convex. Hence, below we provide conditions under which continuity is guaranteed. %

\begin{proposition}\longthmtitle{Continuity of $\JJ^{j}$ at $x_F$}\label{pr:cost-continuity}
	Assume that for any neighborhoods $\XX_\gamma := \setdef{x \in \XX}{\norm{x-x_F}\le\gamma}$ and $\UU_\gamma := \setdef{u \in \UU}{\norm{u} \le \gamma}$ with $\gamma > 0$, there exists a neighborhood $\XX_\eta := \setdef{x \in \XX}{\norm{x - x_F} \le \eta}$ with $\eta > 0$ such that for any initial point $x \in \XX_\eta$, there exist  trajectories $[x_0, x_1, \dots, x_K] \subset \XX_\gamma$ and $[u_0, u_1, \dots, u_{K-1}] \subset \UU_\gamma$ that satisfy $x_0 = x$, $x_K = x_F$, $x_{i+1} = f(x_i, u_i)$, for all $i \in [K-1]$, and 
	\begin{align*}
		\sup_{\mu\in\DD^{j}} \left[\CVaR_{\beta}^{\mu}\left[g(x_i,w)\right]\right]\leq \delta,
	\end{align*}
	for all $i \in [K-1]$.
	Then, the function $\JJ^{j}$ is continuous at $x_F$, where $\JJ^{j}$ is the function $\JJ_{(\SSo,\DDo)}$ given in~\eqref{eq:DR-RLMPC:main} with $(\SSs^{j},\DD^{j})$ replacing $(\SSo,\DDo)$.
\end{proposition}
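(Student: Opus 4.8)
The plan is to exploit that $\JJ^{j}$ is nonnegative and that $\JJ^{j}(x_F)=0$, so that continuity at $x_F$ reduces to the one-sided statement $\JJ^{j}(x)\to 0$ as $x\to x_F$; no two-sided estimate is needed since $\JJ^{j}$ attains its minimum value at $x_F$. First I would record the base facts. The constant trajectory at $x_F$ with zero input is feasible for~\eqref{eq:DR-RLMPC:main} with $x=x_F$ and $(\SSo,\DDo)=(\SSs^{j},\DD^{j})$: the dynamics hold because $x_F$ is an equilibrium, the state/input constraints hold since $x_F\in\XX$ and $0\in\UU$, the DR risk constraint holds because Assumption~\ref{assump:safeset} gives $g(x_F,w)\le\delta$ for all $w\in\WW$ and hence $\CVaR_{\beta}^{\mu}[g(x_F,w)]\le\delta$ for every $\mu$, and the terminal constraint $x_F\in\Pis{\SSs^{j}}$ holds because $x_F\in\Pis{\SSs^0}$ (Assumption~\ref{assump:safeset}) and $\SSs^0\subseteq\SSs^{j}$ (argued in the proof of Proposition~\ref{prop:t-t+1}). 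Since its cost is $0$ and $\JJ^{j}\ge 0$ (as $r\ge 0$ and $Q^{j}\ge 0$), this yields $\JJ^{j}(x_F)=0$; the same reasoning, applied to the robust trajectory $\xtraj^0$ whose cost-to-go from $x_F$ is $\sum_{t=T_0}^{\infty}r(x_F,0)=0$, gives $Q^{j}(x_F)=0$.

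Next, fix $\epsilon>0$. By continuity of the stage cost $r$ at $(x_F,0)$ together with $r(x_F,0)=0$, I choose $\gamma>0$ small enough that $r(x,u)\le\epsilon/K$ for every $(x,u)\in\XX_\gamma\times\UU_\gamma$; note $\XX_\gamma\subseteq\XX$ and $\UU_\gamma\subseteq\UU$ by definition of these sets. Feeding this particular $\gamma$ into the hypothesis of the proposition produces a radius $\eta>0$ with the stated reachability property. Then, for any $x$ with $\norm{x-x_F}\le\eta$, I take the trajectory $[x_0,\dots,x_K]\subset\XX_\gamma$ and $[u_0,\dots,u_{K-1}]\subset\UU_\gamma$ supplied by the hypothesis, with $x_0=x$ and $x_K=x_F$. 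This trajectory is feasible for~\eqref{eq:DR-RLMPC:main} with $(\SSo,\DDo)=(\SSs^{j},\DD^{j})$: the dynamics and state/input constraints hold since it lies in $\XX_\gamma\times\UU_\gamma\subseteq\XX\times\UU$; the DR risk constraint holds for all $k\in[K-1]$ by hypothesis; and the terminal constraint holds since $x_K=x_F\in\Pis{\SSs^{j}}$. Its cost is $\sum_{k=0}^{K-1}r(x_k,u_k)+Q^{j}(x_K)=\sum_{k=0}^{K-1}r(x_k,u_k)+Q^{j}(x_F)\le K\cdot(\epsilon/K)+0=\epsilon$. By optimality, $\JJ^{j}(x)\le\epsilon$, and since $\JJ^{j}(x)\ge 0=\JJ^{j}(x_F)$, we get $|\JJ^{j}(x)-\JJ^{j}(x_F)|\le\epsilon$ whenever $\norm{x-x_F}\le\eta$, i.e.\ $\JJ^{j}$ is continuous at $x_F$.

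The argument is mostly bookkeeping, because the genuinely hard part — the existence of short, DR-risk-feasible paths from a whole neighborhood of $x_F$ into $x_F$ — is exactly what the hypothesis assumes. The two points that require care are: (i) establishing $x_F\in\Pis{\SSs^{j}}$ and $Q^{j}(x_F)=0$ rigorously, so that the terminal term of the constructed feasible solution vanishes and the cost bound is genuinely $\epsilon$ rather than $\epsilon$ plus a terminal cost; and (ii) respecting the order of quantifiers, invoking the reachability hypothesis with the $\gamma$ extracted from the continuity of $r$, not with an arbitrary one. I do not anticipate any obstacle beyond this.
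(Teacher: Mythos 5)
Your proposal is correct and follows essentially the same route as the paper's proof: choose $\gamma$ via continuity of $r$ at $(x_F,0)$ so that the stage cost is at most $\epsilon/K$ on $\XX_\gamma\times\UU_\gamma$, feed that $\gamma$ into the reachability hypothesis to obtain $\eta$, and use the resulting feasible trajectory to upper-bound $\JJ^j$ by $\epsilon$ on $\XX_\eta$, with nonnegativity supplying the lower bound. The only differences are presentational — you argue directly with $\epsilon$--$\eta$ where the paper argues by contradiction with $J_{\mathrm{sup}}$, and you explicitly justify $x_F\in\Pis{\SSs^j}$ and $Q^j(x_F)=0$, details the paper leaves implicit in~\eqref{eq:JJ-no-Q}.
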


\begin{proof}
	From the hypothesis and by continuity of $r(\cdot, \cdot)$, we always can pick a $\gamma>0$ such that the optimal solution of the finite-horizon problem~\eqref{eq:DR-RLMPC:main} starting from any point in $\XX_\eta$ be a trajectory that ends at $x_F$.
	Keeping this fact in mind, consider the $(j+1)$-th iteration and the finite-horizon problem~\eqref{eq:DR-RLMPC:main} for some $x \in \XX_\eta$. Denoting the solution of the optimization problem as~\eqref{eq:optSol-j} with $j$ replaced with $j+1$, we have 
	\begin{align}\label{eq:JJ-no-Q}
		\JJ^{j}(x) = \sum_{i=t}^{t+K-1} r(x_{i|t}^{*,j+1}, u_{i|t}^{*,j+1}),
	\end{align} 
	 where $x_{0 | t}^{*,j+1} = x$.
		Hereafter, we use the expression for cost-to-go on $\XX_\eta$ given in~\eqref{eq:JJ-no-Q} to show that convergence of $x$ to $x_F$, results in the convergence of $\JJ^{j}(x)$ to $\JJ^{j}(x_F)=0$.
	
	For the sake of contradiction, assume that $\JJ^{j}$ is not continuous at $x_F$. This implies that there exists a sequence $\{x[\ell]\}_{\ell = 0}^\infty \subset \XX_\eta$ such that $\lim_{\ell \to \infty} x[\ell] = x_F$ and $\limsup_{\ell \to \infty} \JJ^{j} (x[\ell]) =: J_{\mathrm{sup}} > 0 = \JJ^{j}(x_F)$. For the sake of simplicity assume that the sequence $\{x[\ell]\}_{\ell =0}^\infty$ is such that $\lim_{\ell \to \infty} \JJ^{j} (x[\ell]) = J_{\mathrm{sup}}$. This is possible as otherwise one can always pick a subsequence of $\{x[\ell]\}_{\ell = 0}^\infty$ for which this is true. 
	By continuity of $r$ and the fact that $r(x_F,0) = 0$, we obtain that for every $m>0$ there exists $\gamma_m > 0$ such that if $(x,u) \in \XX_{\gamma_m} \times \UU_{\gamma_m}$, then $r(x,u) < \frac{m}{K}$. Set $m = J_{\mathrm{sup}}/2$. By hypothesis, there exists $\eta_m > 0$ such that if $x \in \XX_{\eta_m}$, then there exists a finite-length trajectory $([x_0,x_1, \dots, x_K],[u_0,u_1,\dots,u_{K-1}]) \subset \XX_{\gamma_m} \times \UU_{\gamma_m}$ with $x_0 = x$ and $x_K = x_F$ and using~\eqref{eq:JJ-no-Q}, we have
		\begin{align*}
			\JJ^j(x) = \sum_{k=0}^{K-1} r(x_k,u_k)  <  m = J_{\mathrm{sup}}/2.
		\end{align*} 
	That is, for all $x \in \XX_{\eta_m}$, we have $\JJ^j(x) < J_{\mathrm{sup}}/2$. %
	This contradicts the fact that $\lim_{\ell \to \infty} \JJ^{j} (x[\ell]) = J_{\mathrm{sup}}$ for the sequence $\{x[\ell]\}_{\ell =0}^\infty$ satisfying $x[\ell] \to x_F$.
\end{proof}

The hypothesis of the above result resembles those of stability conditions. That is, we require the system to be such that starting in a neighborhood of the target, the system can be steered to the target without drifting away from it and using small amount of control effort. These conditions might be difficult to verify and so, we provide an alternative continuity result in the following section. Furthermore, this section also provides conditions under which the system reaches the target in finite number of steps. These results contain a unifying assumption that the target can be reached locally in one step.

\section{Locally $1$-step reachable system}\label{sec:one-step}
Our method in Algorithm~\ref{ag:DR_iteration} requires trajectories to be of finite length. We discussed in Lemma~\ref{le:finite-nbd} how we can terminate each trajectory in finite-time by ensuring that it reaches a neighborhood of the target point in a finite number of steps. In this section, we go further and analyze the finite-time convergence to the target. We do this for the class of systems that can reach the target in one-step from an open neighborhood surrounding it.

We point to works \cite{POMS-DQM:1998}, \cite{POMS-DQM-JBR:1999}, \cite{ECK-JMM:2004}, and \cite{AA-AHG-AF-EK:2018} that investigate finite-time convergence of MPC schemes. Our result generalizes these works as we consider a nonlinear system (as opposed to works \cite{POMS-DQM:1998} and \cite{ECK-JMM:2004} that only consider linear system) and do not consider any region of attraction (as opposed to work \cite{POMS-DQM-JBR:1999} that proves finite-time convergence in the presence of a region of attraction). Also, we assume that any control input other than zero incurs a cost which generalizes the setup in~\cite{AA-AHG-AF-EK:2018}. %

We have the following standing assumption for this section which states that from every point in some neighborhood of the target, there exists a feasible control input that takes the system to the target. 
\begin{assumption}\longthmtitle{Locally $1$-step reachable}\label{as:one-step}
	The system~\eqref{sys} is locally $1$-step reachable. That is, there exists a $\gamma > 0$ such that for every $x \in \XX_\gamma := \setdef{x \in \XX}{\norm{x - x_F} \le \gamma}$ there exists $\uxone \in \UU$ such that $f(x,\uxone) = x_F$. \oprocend
\end{assumption}
As per the above assumption, in all the results of this section, $\XX_\gamma$ represents the set of points from which target can be reached in one step.

\begin{proposition}\longthmtitle{Finite-time convergence of $\drmpc$ to $x_F$}\label{prop:conv_x_f}
	Suppose Assumption~\ref{as:one-step} holds. Assume that the stage cost admits the separable form $r(x, u) := r_x(x) + r_u(u)$, where $r_x$ and $r_u$ are continuous functions satisfying  
	\begin{subequations}\label{eq:rx_ru_const}
	\begin{align}
			r_x(x_F) & = 0, \quad r_x(x)  > 0, \quad \forall x \in \XX  \setminus \{x_F\},
			\\
			r_u(0) & = 0, \quad	r_u(u)  > 0, \quad \forall u \in  \UU\setminus\{0\}.%
	\end{align}
\end{subequations}
	Assume that the following hold:
	\begin{enumerate}
		\item \label{hy:two} There exists $c > 0$ such that 
		\begin{align*}
			\XX_c := \setdef{x \in \XX}{\norm{x - x_F} \le c} \subset \QQ,
		\end{align*}
		where %
		$\QQ$ is the set of states in $\XX_\gamma$ that satisfy a particular cost inequality, specifically,
		\begin{align*}
			\QQ := &\setdef{x\in\XX_\gamma}{\exists \uxone\in\UU \text{ such that } f(x, \uxone) = x_F 
				\\
				& \quad \text{ and } r_x(f(x,u)) + r_u(u) \ge r_u(\uxone), \forall u\in\UU}.
		\end{align*} 
		\item There exists a class $\KK$ function $\rf$ such that $r_x(x)\ge \rf(\norm{x - x_F})$ for all $x \notin \QQ$.
	\end{enumerate}
	Then, for every iteration $j\ge1$ of the DR-RC-Iterative-MPC procedure generated by $\drmpc$, there exists a finite number $\overline{t}\in\naturalnumbers_{\ge 1}$ such that $x_{\overline{t}}^j = x_F$.
\end{proposition}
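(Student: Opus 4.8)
The plan is to split the argument into two phases. In the first, I will show that, after finitely many time steps, the state of the $\drmpc$-generated trajectory of iteration $j$ enters the set $\QQ$; in the second, I will show that once the state lies in $\QQ$, the target $x_F$ is reached within at most two more steps. Throughout I write $\JJ^{j-1}$ for the optimal value of~\eqref{eq:DR-RLMPC:main} with $(\SSo,\DDo)=(\SSs^{j-1},\DD^{j-1})$ and $\overline{Q}=Q^{j-1}$, and I will use that $x_F$ belongs to the robust trajectory of Assumption~\ref{assump:safeset}, which is never discarded, so that $x_F\in\Pis{\SSs^{j-1}}$, $\overline{Q}(x_F)=0$, and $g(x_F,\cdot)\le\delta$ on $\WW$ (hence the distributionally robust risk constraint holds at $x_F$).

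\emph{Phase one.} The Lyapunov-type inequality established in the proof of Proposition~\ref{pr:attr_xf} gives, along the closed loop, $\JJ^{j-1}(x_t^j)\ge\JJ^{j-1}(x_{t+1}^j)+r(x_t^j,u_t^j)$. Summing over $t$ and using $\JJ^{j-1}\ge 0$ yields $\sum_{t\ge 0} r(x_t^j,u_t^j)\le\JJ^{j-1}(x_S)<\infty$, where finiteness of $\JJ^{j-1}(x_S)$ follows from feasibility of~\eqref{eq:DR-RLMPC:main} (Proposition~\ref{prop:t-t+1}) together with the fact that stored trajectories have finite length, so $\overline{Q}$ is finite-valued. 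If the trajectory reaches $x_F$ we are done; otherwise it is infinite. By hypothesis~\ref{hy:two}, $\XX_c\subseteq\QQ$, hence $\XX\setminus\QQ\subseteq\{x\in\XX:\norm{x-x_F}>c\}$; combined with the class-$\KK$ lower bound $r_x(x)\ge\rf(\norm{x-x_F})$ for $x\notin\QQ$ and monotonicity of $\rf$, this gives $r(x_t^j,u_t^j)\ge r_x(x_t^j)\ge\rf(c)>0$ whenever $x_t^j\notin\QQ$. Since the stage costs are summable, only finitely many steps can have $x_t^j\notin\QQ$, so there is a step $t_1$ with $x_{t_1}^j\in\QQ$.

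\emph{Phase two.} I will bracket $\JJ^{j-1}(x_{t_1}^j)$. On one side, the input sequence ``apply $\uxone$ (feasible by Assumption~\ref{as:one-step}), then apply zero'' is feasible for~\eqref{eq:DR-RLMPC:main} from $x_{t_1}^j$ (using $x_F\in\Pis{\SSs^{j-1}}$, the risk constraint at $x_F$, and the risk constraint at $x_{t_1}^j$ itself, which holds because the problem is feasible at iteration $j$, time $t_1$), so $\JJ^{j-1}(x_{t_1}^j)\le r_x(x_{t_1}^j)+r_u(\uxone)$. On the other side, for any optimal solution $(x_k^*,u_k^*)$ I write its cost as $r_x(x_{t_1}^j)+\bigl(r_u(u_0^*)+r_x(f(x_{t_1}^j,u_0^*))\bigr)+(\text{remaining nonnegative terms})$ and invoke the cost inequality in the definition of $\QQ$ with $u=u_0^*$ to get $\JJ^{j-1}(x_{t_1}^j)\ge r_x(x_{t_1}^j)+r_u(\uxone)$. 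Equality of the two bounds forces the remaining terms to vanish; using $r_x>0$ off $x_F$, $r_u>0$ off $0$, and $\overline{Q}(x)\ge r_x(x)$ (so $\overline{Q}(x)=0$ only at $x_F$), this yields $u_k^*=0$ for $k\ge 1$ and $x_k^*=x_F$ for $k\ge 2$ (for $K=1$ the same conclusion comes from $\overline{Q}(x_1^*)=r_x(x_1^*)$ together with positivity of $r$). Hence the successor $x^+:=x_{t_1+1}^j=f(x_{t_1}^j,u_0^*)$ satisfies $f(x^+,0)=x_F$. If $x^+=x_F$, take $\overline{t}=t_1+1$. If $x^+\neq x_F$, repeat the bracketing at time $t_1+1$: ``apply $0$, then stay at $x_F$'' is feasible with cost $r_x(x^+)$, while any optimal solution has cost at least $r_x(x^+)+r_u(u_0^{**})$, so $r_u(u_0^{**})=0$, i.e.\ $u_0^{**}=0$; thus $x_{t_1+2}^j=f(x^+,0)=x_F$ and $\overline{t}=t_1+2$.

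\emph{Main obstacle.} The delicate point is in Phase two: membership in $\QQ$ does not make the one-step MPC successor equal $x_F$ — it only forces $f(x^+,0)=x_F$ — so one additional application of the bracketing argument is needed, and one must verify that every auxiliary trajectory used in the bracketing is genuinely feasible for~\eqref{eq:DR-RLMPC:main}, in particular that $x_F$ stays in the terminal set $\Pis{\SSo}$ and meets the distributionally robust risk constraint throughout the run; both are secured by Assumption~\ref{assump:safeset}. A minor, routine complication is separating the cases $K\ge 2$ and $K=1$, where the vanishing-tail information resides in the terminal cost $\overline{Q}$ rather than in an intermediate stage cost.
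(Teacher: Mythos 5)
Your proof is correct and follows the same two-phase skeleton as the paper's (finitely many steps outside $\QQ$ by summability of stage costs against the lower bound $\rf(c)>0$, then a bracketing of $\JJ^{j-1}$ on $\QQ$), but your phase two is genuinely more careful and in fact repairs a leap in the paper's argument. The paper concludes from the equality $r_x(x)+r_u(\uxone)=\JJ^{j-1}(x)$ that the optimal first input satisfies $\tilde u_0=\uxone$, hence one-step convergence from $\QQ$; this does not follow, since the definition of $\QQ$ only gives $r_u(\uxone)\le r_u(u)+r_x(f(x,u))$ and another minimizer $u'$ with $f(x,u')\neq x_F$ but $r_u(u')+r_x(f(x,u'))=r_u(\uxone)$ is not excluded. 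You correctly observe that the vanishing of the remaining stage and terminal costs only forces $f(x^+,0)=x_F$ for the realized successor $x^+$, and you close the gap with a second, simpler bracketing at $x^+$ (the zero input is feasible and any nonzero first input is strictly suboptimal), yielding $x_F$ in at most two steps from $\QQ$ — which is all the proposition needs. You also make explicit two points the paper leaves implicit: feasibility of the comparison trajectory (via $x_F\in\Pis{\SSs^{j-1}}$, $Q^{j-1}(x_F)=0$, and robustness of $x_F$ under Assumption~\ref{assump:safeset}), and the $K=1$ case where the tail information sits in $\overline{Q}$ via $\overline{Q}(x)\ge r_x(x)$. The only cost of your route is a weaker step count ($\overline{k}+2$ rather than $\overline{k}+1$), which is immaterial to the statement.
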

\begin{proof}
	We divide the proof into two steps. First, we show that starting from any point in $\XX$ and implementing the $\drmpc$ scheme, we reach the set $\QQ$ in a finite number of steps. Then, we establish that the optimal action from any point $\QQ$ is to reach the target $x_F$ in one step.

	For the first step, suppose that $x_S\notin\QQ$. 
	By hypothesis~\ref{hy:two}, we know that $\QQ$ contains an open neighborhood of $x_F$ and as a result there exists a strictly positive value $\overline{\rf} := \inf_{x\in\XX\setminus\QQ} \rf(\norm{x-x_F})$.
	Therefore, for all $x\notin\QQ$, $r_x(x)\ge\overline{\rf}$. From the proof of Proposition~\ref{pr:attr_xf}, for any trajectory $j$, we have the relation 
	\begin{align}\label{eq:next-iter-J}
		\JJ^{j-1}(f(x,u_x)) - \JJ^{j-1}(x)\le  -r(x, u_x),
	\end{align}
	where $u_x$ is the control input obtained by solving the finite-horizon problem for the state $x$. 
	Using the assumed separable from of $r$ and the fact that $r_u$ is nonnegative, we get
	\begin{align*}
		\JJ^{j-1}(f(x, u_x)) - \JJ^{j-1}(x) \le - r_x(x) \le - \overline{\rf}
	\end{align*}
	for all $j\ge1$ and $x \notin\QQ$. Let $\overline{k}$ be the smallest integer such that $\overline{k} \,\overline{\rf} > \JJ^{j-1}(x_S)$. Such an integer exists as $\overline{\rf}$ is positive and $\JJ^{j-1}(x_S)$ is finite. We assert that the state in iteration $j$ enters $\QQ$ in at most $\overline{k}$ number of steps. Indeed, otherwise, we have $r(x^j_k, u^j_k) \ge r_x(x^j_k) \ge\overline{\rf}$ for all $k\in[\overline{k}]$ and consequently applying~\eqref{eq:next-iter-J} iteratively, we have $\JJ^{j-1}(x^j_{\overline{k}}) \le \JJ^{j-1}(x_S) - \overline{k} \, \overline{\rf} < 0$ which is in contradiction with the fact that $\JJ^{j-1}$ is a nonnegative function. Thus, we conclude that if $x_S\notin\QQ$ then $x^j_{\overline{k}}\in\QQ$ for some finite integer $\overline{k}$, where the exact value of $\overline{k}$ is dependent on the iteration.

	Now, in the following, we proceed to the next step and show that we reach $x_F$ in one step from any $\QQ$. 
	At iteration $j$, consider any $x \in \QQ$ and let 
	\begin{align*}
		[x, \xt_1, \xt_2, \dots, \xt_{K}]
		\quad \text{ and } \quad [\ut_0, \ut_1, \ut_2, \dots, \ut_{K-1}]
	\end{align*}
	be the optimizer of the finite-horizon problem~\eqref{eq:DR-RLMPC:main} with $x_0 = x$. Then, we have
	\begin{align*}
		\JJ^{j-1}(x) = r(x,\ut_0) + \sum_{k=1}^{K-1} r(\xt_k,\ut_k) + 	Q^{j-1}(\xt_K).
	\end{align*}
	Using the fact that $r$, $r_u$, and $Q^{j-1}$ are nonnegative and $\tilde{x}_1 = f(x,\tilde{u}_0)$, we get
	\begin{align}
		\JJ^{j-1}(x) &\ge r(x, \ut_0) + r(f(x, \ut_0), \ut_1) \notag
		\\
		&\ge r_x(x) + r_u(\ut_0) + r_x(f(x, \ut_0)), \label{eq:ineq-uxone}
	\end{align}
	where we have used the separable form of $r$ and nonnegativity of $r_u$. By definition of $\QQ$, for the state $x \in\QQ$ for which we have written the above inequality, there exists an input $\uxone\in\UU$ such that $f(x, \uxone)=x_F$ and $r_u(\uxone)\le r_x(f(x, u))+r_u(u)$ for all $u\in\UU$.  Selecting $u = \tilde{u}_0$ in this inequality we get $r_u(\uxone) \le r_u(\tilde{u}_0) + r_x(f(x,\tilde{u}_0))$. Adding $r_x(x)$ to both sides of this inequality yields
	\begin{align*}
		r_x(x) + r_u(\uxone) & \le r_x(x) + r_u(\tilde{u}_0) + r_x(f(x,\tilde{u}_0)) 
		\\
		& \le \JJ^{j-1}(x)
	\end{align*}
	where the last inequality follows from~\eqref{eq:ineq-uxone}. Using the fact that $\JJ^{j-1}(x)$ is the optimal value of the finite-horizon problem, the above inequality implies that $\tilde{u}_0 = \uxone$. 
	This further implies that 
	for every $j\ge1$, when the state enters $\QQ$ the optimum solution is to reach $x_F$ in one step. As a result, in any iteration $j \ge 1$, $x_F$ is reachable in $\overline{k}+1$ steps from $x_S\notin\QQ$, concluding the proof.
\end{proof}

In the above result, the hypothesis on the problem data for ensuring finite-time convergence is rather implicit. That is, we require the set $\QQ$ to be nonempty and contain the target in the interior. This set stands for the set of states from which the optimal solution of the finite-horizon problem is to reach the target in one step. Below we identify sufficient conditions on the dynamics and the stage cost that guarantee the existence of such a set, thereby making it easier to check if the system converges in finite time.

\begin{proposition}\longthmtitle{Sufficient conditions for existence of $\QQ$ given in Proposition~\ref{prop:conv_x_f}}\label{lem:q_conds}
	Suppose Assumption~\ref{as:one-step} holds. Assume that for system~\eqref{sys} and stage cost~\eqref{eq:rx_ru_const} there exists $c>0$ and $\alpha_1, \alpha_2, \alpha_3, C_1, C_2, C_3 \ge 0$  such that we have %
	\begin{enumerate}
		\item $\norm{f(x, u_1)-f(x, u_2)}\ge C_1\norm{u_1-u_2}^{\alpha_1}$ for all $u_1, u_2\in\UU$ and for all $x\in \XX_c$, where $\XX_c = \setdef{x \in \XX}{\norm{x - x_F} \le c}$.\label{qq_assump1}
		\item $\abs{r_u(u_1)-r_u(u_2)}\le C_2\norm{u_1-u_2}^{\alpha_2}$ for all $u_1, u_2\in\UU$.\label{qq_assump2}
		\item $r_x(x)\ge C_3\norm{x-x_F}^{\alpha_3}$ for all $x\in \XX$.\label{qq_assump3}
	\end{enumerate}
	Assume that either of the following hold:
	\begin{enumerate}[label=(\alph*)]
		\item \label{qq_assump4_1} $\alpha_1 \alpha_3 = \alpha_2$ and $C_3C_1^{\alpha_3} \ge C_2$, or 
		\item \label{qq_assump4_2} $\alpha_1 \alpha_3 < \alpha_2$ and $\diam(\UU) \le (\frac{C_3C_1^{\alpha_3}}{C_2})^{\frac{1}{\alpha_2-\alpha_1\alpha_3}}$, where recall that $\diam(\UU)$ stands for the diameter of $\UU$. 
	\end{enumerate} 
	Then, $\XX_c \subseteq \QQ$. %
\end{proposition}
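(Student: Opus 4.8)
The plan is to fix an arbitrary $x \in \XX_c$ and verify directly that it belongs to $\QQ$ by exhibiting a suitable one-step input. First observe that we may assume without loss of generality that $c \le \gamma$: hypothesis~\ref{qq_assump1} is quantified over $x \in \XX_c$, so shrinking $c$ to $\min(c,\gamma)$ preserves hypotheses~\ref{qq_assump1}--\ref{qq_assump3} while ensuring $\XX_c \subseteq \XX_\gamma$. Hence for every $x \in \XX_c$, Assumption~\ref{as:one-step} provides some $\uxone \in \UU$ with $f(x,\uxone) = x_F$. It then only remains to show that this $\uxone$ satisfies
$r_x(f(x,u)) + r_u(u) \ge r_u(\uxone)$ for all $u \in \UU$;
by the definition of $\QQ$ this gives $x \in \QQ$, and since $x \in \XX_c$ is arbitrary, $\XX_c \subseteq \QQ$.

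The core step is to chain the three quantitative bounds. Fix $u \in \UU$ and set $d := \norm{u-\uxone}$, noting $d \in [0,\diam(\UU)]$. Using hypothesis~\ref{qq_assump3}, then $f(x,\uxone)=x_F$, then hypothesis~\ref{qq_assump1} (applicable since $x \in \XX_c$) together with monotonicity of $t\mapsto t^{\alpha_3}$, I would estimate
\begin{align*}
	r_x(f(x,u)) &\ge C_3\,\norm{f(x,u)-x_F}^{\alpha_3} = C_3\,\norm{f(x,u)-f(x,\uxone)}^{\alpha_3} \\
	&\ge C_3 C_1^{\alpha_3}\, d^{\alpha_1\alpha_3}.
\end{align*}
On the other hand, hypothesis~\ref{qq_assump2} yields $r_u(u)-r_u(\uxone) \ge -\abs{r_u(u)-r_u(\uxone)} \ge -C_2\, d^{\alpha_2}$. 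Adding these two estimates, it suffices to prove that the scalar function $\phi(d) := C_3 C_1^{\alpha_3} d^{\alpha_1\alpha_3} - C_2 d^{\alpha_2}$ is nonnegative for every $d \in [0,\diam(\UU)]$.

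It then remains to dispatch the two cases. If $C_2 = 0$ or $d = 0$ the inequality $\phi(d)\ge 0$ is immediate, so assume $C_2>0$ and $d>0$ and write $\phi(d) = d^{\alpha_1\alpha_3}\bigl(C_3 C_1^{\alpha_3} - C_2 d^{\alpha_2-\alpha_1\alpha_3}\bigr)$. Under hypothesis~\ref{qq_assump4_1} the exponent $\alpha_2-\alpha_1\alpha_3$ vanishes, so the bracketed term equals $C_3 C_1^{\alpha_3} - C_2 \ge 0$. Under hypothesis~\ref{qq_assump4_2} we have $\alpha_2-\alpha_1\alpha_3 > 0$, hence $d\mapsto C_2 d^{\alpha_2-\alpha_1\alpha_3}$ is increasing, and the bracket stays nonnegative as long as $d \le (C_3 C_1^{\alpha_3}/C_2)^{1/(\alpha_2-\alpha_1\alpha_3)}$; this holds because $d \le \diam(\UU)$ and $\diam(\UU)$ is, by assumption, at most that very quantity. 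In either case $\phi(d)\ge 0$ for all admissible $d$, which completes the verification that $\uxone$ meets the defining inequality of $\QQ$.

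I do not expect a genuine obstacle: the statement is essentially an exercise in composing the assumed Hölder-type bounds and then checking a one-variable polynomial-in-$d^{\cdot}$ inequality. The only points needing care are the harmless reduction to $c \le \gamma$, the degenerate cases $C_2 = 0$ and $d = 0$ (where the factorization above would be illegitimate), and being careful to invoke hypothesis~\ref{qq_assump1} only for states $x \in \XX_c$, which is exactly the set under consideration.
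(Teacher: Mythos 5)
Your proposal is correct and follows essentially the same route as the paper's proof: combine hypotheses (iii), (ii), and (i) to lower-bound $r_x(f(x,u))+r_u(u)$ by $r_u(\uxone)+C_3C_1^{\alpha_3}\norm{u-\uxone}^{\alpha_1\alpha_3}-C_2\norm{u-\uxone}^{\alpha_2}$, then check the residual scalar inequality under conditions (a) or (b). Your explicit treatment of the degenerate cases ($C_2=0$, $d=0$) and the reduction to $c\le\gamma$ are minor refinements of details the paper leaves implicit.
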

	\begin{proof}
		Pick any $x \in \XX_c$. Without loss of generality, there exists $\uxone \in\UU$ such that $f(x, \uxone)=x_F$. From hypothesis~\ref{qq_assump3} we have for any $u \in \UU$, 
		\begin{align*}
			r_x(f(x, u)) \ge C_3\norm{f(x,u)-x_F}^{\alpha_3}. 
		\end{align*}
		Further, by hypothesis~\ref{qq_assump2},
		\begin{align*}
			r_u(u) \ge r_u(\uxone) -C_2\norm{u-\uxone}^{\alpha_2}.
		\end{align*}
		Adding the above two inequalities, we get
		\begin{align*}
			r_x(f(x, u)) + r_u(u)\ge& C_3\norm{f(x, u)-f(x, \uxone)}^{\alpha_3}
			\\
			& \quad +r_u(\uxone)-C_2\norm{u-\uxone}^{\alpha_2}.
		\end{align*}
		Using hypothesis~\ref{qq_assump1} in the above inequality yields
		\begin{align*}
			r_x(f(x, u)) + r_u(u)\ge&
			C_3C_1^{\alpha_3}\norm{u-\uxone}^{\alpha_1\alpha_3}+r_u(\uxone)
			\\
			&-C_2\norm{u-\uxone}^{\alpha_2}.
		\end{align*}
		Now, by considering either condition~\ref{qq_assump4_1} or condition~\ref{qq_assump4_2} on the constants, we have
		\begin{align*}
			C_3C_1^{\alpha_3}\norm{u-\uxone}^{\alpha_1\alpha_3}-C_2\norm{u-\uxone}^{\alpha_2} \ge 0,
		\end{align*}
		for all $u$ and thus, $r_x(f(x, u)) + r_u(u)\ge r_u(\uxone)$ for all $u$. As a result, $x \in \QQ$ and so $\XX_c \subseteq \QQ$. 
	\end{proof}

\begin{remark}\longthmtitle{Sufficient conditions finite-time convergence}
	Hypothesis (i), (ii), and (iii) of Lemma~\ref{lem:q_conds} and conditions on the coefficients involved there reflect different aspects of problem data that together guarantee convergence of the trajectory in finite time. Condition (iii) enforces the state-cost to have enough growth rate so that the optimal input drives the system to the target in one step. On the other hand, condition (ii) affirms that the growth rate of the input-cost is bounded so that the optimal input is allowed to take high enough values, again to get to the target in one step. Finally, for these two cost related growth conditions to work together, we require the system dynamics to be sensitive with respect to the input in the neighborhood of the target, which is what condition (i) stipulates. 
	\oprocend
\end{remark}
In the final result of this section, we show how the conditions imposed in Proposition~\ref{prop:conv_x_f} for finite-time convergence also simplify the conditions for continuity of the function $\JJ^{j}$.

\begin{proposition}\longthmtitle{Continuity of $\JJ^{j}$ at $x_F$}\label{pr:continuity}
	Let the stage cost $r$ be of the form~\eqref{eq:rx_ru_const} and hypothesis (i) and (ii) of Proposition~\ref{prop:conv_x_f} hold. Assume that $f$ is continuous on $\QQ\times\UU$. Then, for any $j \ge 1$, the function $\JJ^{j}$ is continuous at $x_F$, where $\JJ^{j}$ is the function $\JJ_{(\SSo,\DDo)}$ given in~\eqref{eq:DR-RLMPC:main} with $(\SSs^{j},\DD^{j})$ replacing $(\SSo,\DDo)$.
\end{proposition}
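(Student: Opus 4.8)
The plan is to show directly that $\JJ^{j}(x) \to \JJ^{j}(x_F) = 0$ as $x \to x_F$, by first disposing of two easy facts and then obtaining a closed form for $\JJ^{j}$ near the target. The easy facts: $\JJ^{j} \ge 0$ everywhere, since its value is a sum of the nonnegative stage costs $r = r_x + r_u$ and the nonnegative terminal cost $\Qo = Q^{j}$; and $\JJ^{j}(x_F) = 0$, because the constant trajectory $[x_F,\dots,x_F]$ with zero inputs is feasible for~\eqref{eq:DR-RLMPC:main} (the target lies in the robust trajectory of Assumption~\ref{assump:safeset}, hence $x_F \in \Pis{\SSs^{j}}$ with $Q^{j}(x_F) = 0$, and $g(x_F,w) \le \delta$ for all $w$ makes $\sup_{\mu}\CVaR_{\beta}^{\mu}[g(x_F,w)] \le \delta$ by monotonicity of CVaR). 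So continuity at $x_F$ reduces to $\limsup_{x\to x_F}\JJ^{j}(x) \le 0$, which I would establish on the neighborhood $\XX_c$ of $x_F$ furnished by hypothesis~(i) of Proposition~\ref{prop:conv_x_f}.

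The crucial structural step is to identify $\JJ^{j}$ on $\XX_c$. Since $\XX_c \subseteq \QQ$, I would re-run the argument of Step~2 in the proof of Proposition~\ref{prop:conv_x_f}, applied with $(\SSo,\DDo) = (\SSs^{j},\DD^{j})$: for any $x \in \QQ$ the optimal solution of~\eqref{eq:DR-RLMPC:main} steers the state to $x_F$ in one step and then holds it there, so
\begin{align*}
	\JJ^{j}(x) = r_x(x) + \phi(x), \qquad \phi(x) := \min\{\, r_u(u) \;:\; u\in\UU,\ f(x,u)=x_F \,\}.
\end{align*}
The minimization defining $\phi$ is well posed because, by Assumption~\ref{as:one-step} (or via the witness control in the definition of $\QQ$), the constraint set is a nonempty closed subset of the compact set $\UU$ and $r_u$ is continuous; in particular $\JJ^{j}$ is finite on $\XX_c$. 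Here I use that the one-step trajectory $[x,x_F,\dots,x_F]$ is also DR-feasible at $x$, i.e.\ that the risk constraint holds on a neighborhood of $x_F$; this is where, if one is cautious, one appeals to continuity of $g$ and compactness of $\WW$ (it is automatic whenever the risk constraint is slack at the target).

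The key estimate is then a one-line observation: for $x \in \QQ$ with witness control $\uxone$, substituting $u = 0$ into the inequality $r_x(f(x,u)) + r_u(u) \ge r_u(\uxone)$ defining $\QQ$ yields $r_u(\uxone) \le r_x(f(x,0)) + r_u(0) = r_x(f(x,0))$, and hence $0 \le \phi(x) \le r_u(\uxone) \le r_x(f(x,0))$. As $x \to x_F$ within $\XX_c$, continuity of $f$ on $\QQ\times\UU$ with $f(x_F,0) = x_F$ gives $f(x,0)\to x_F$, so continuity of $r_x$ with $r_x(x_F)=0$ forces $r_x(f(x,0)) \to 0$, whence $\phi(x)\to 0$; and $r_x(x)\to r_x(x_F)=0$ as well. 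Therefore $\JJ^{j}(x) = r_x(x)+\phi(x) \to 0 = \JJ^{j}(x_F)$, which together with $\JJ^{j}\ge 0$ proves continuity at $x_F$.

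The main obstacle is genuinely the second step — pinning down that $\JJ^{j}$ coincides with $r_x + \phi$ on a full neighborhood of $x_F$. This rides on two ingredients: the Step-2 analysis of Proposition~\ref{prop:conv_x_f} (that from $\QQ$ the one-step move to $x_F$ is optimal), and the fact that $x_F$ is permanently retained in the sampled-safe-set with zero cost-to-go so this move admits a cost-free continuation. Once that reduction is in hand, the substitution $u = 0$ in the definition of $\QQ$ is the only real idea and the rest is a routine squeeze; note that hypothesis~(ii) of Proposition~\ref{prop:conv_x_f} is not used here — only separability of $r$, hypothesis~(i), and continuity of $f$ and $r_x$ enter the argument.
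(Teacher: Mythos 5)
Your proof is correct, and it shares the paper's structural core — the reduction, via Step 2 of the proof of Proposition~\ref{prop:conv_x_f}, to the closed form $\JJ^{j}(x) = r_x(x) + r_u(\uxone)$ for $x\in\QQ$ — but your final limiting argument is genuinely different and in one respect tighter. The paper's proof extracts a convergent subsequence of the optimal one-step controls (using compactness of $\UU$) and identifies its limit $\bar{u}$ as $0$ from the relation $f(x_F,\bar{u}) = x_F$; that identification silently assumes $u=0$ is the \emph{only} control fixing $x_F$, which the stated hypotheses do not guarantee. Your substitution of $u=0$ into the inequality defining $\QQ$, yielding $r_u(\uxone) \le r_x(f(x,0))$ and hence a squeeze as $x \to x_F$, dispenses with both the subsequence extraction and that implicit uniqueness assumption; it needs only $0\in\UU$ (implicit throughout the paper, since $r(x_F,0)=0$ is imposed and the system is held at $x_F$ with zero input) together with continuity of $f(\cdot,0)$ and $r_x$ at $x_F$. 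Your side remarks are also accurate: hypothesis (ii) of Proposition~\ref{prop:conv_x_f} is not used in either argument (it only enters the finite-time-reaching step), and both your proof and the paper's share the same unaddressed subtlety that the finite-horizon problem must remain feasible — in particular the DR risk constraint must hold — at points near $x_F$ for the one-step closed form of $\JJ^{j}$ to be valid there.
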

\begin{proof}
	Pick any $j$. From the proof of Proposition~\ref{prop:conv_x_f}, under the assumed conditions on the stage cost, we know that the optimal solution of the finite-horizon problem~\eqref{eq:DR-RLMPC:main} starting from any point in $\QQ$ is to reach the target $x_F$ in one step. Thus, $\JJ^{j}(x) = r(x,\uxone)$ for all $x \in \QQ$, where $\uxone$ is a control input that takes the system to $x_F$ from $x$ and satisfies the condition in hypothesis (i) of Proposition~\ref{prop:conv_x_f}.
	
	Consider any sequence $\{x[\ell]\}_{\ell =0}^\infty \subset \QQ$ such that $\lim_{\ell \to \infty} x[\ell] = x_F$. For each $\ell$, let $u^{\mathrm{one}}[\ell] \in \UU$ be a control input such that $f(x[\ell],u^{\mathrm{one}}[\ell]) = x_F$ and $\JJ^{j}(x[\ell]) = r(x[\ell],u^{\mathrm{one}}[\ell])$. Since the sequence $\{u^{\mathrm{one}}[\ell]\}$ belongs to a compact set, there exists a convergent subsequence, that is, $\lim_{k\to\infty} u^{\mathrm{one}}[\ell_k] = \bar{u}$. We first show that $\bar{u} = 0$. To this end, from continuity of $f$ on $\QQ\times \UU$ we have 
	\begin{align*}
		x_F = \lim_{k \to \infty} f(x[\ell_k],u^{\mathrm{one}}[\ell_k]) & = f(\lim_{k \to \infty} (x[\ell_k],u^{\mathrm{one}}[\ell_k] )) 
		\\
		&= f(x_F,\bar{u}),
	\end{align*}
	which clearly implies that $\bar{u} = 0$. Thus, if a subsequence of $\{u^{\mathrm{one}}[\ell]\}$ converges, then the accumulation point is $0$. For the sake of contradiction, assume that $\JJ^j$ is not continuous at $x_F$ and so
	\begin{align*}
		\limsup_{\ell \to \infty} \JJ^{j}(x[\ell]) := J_{\mathrm{sup}} > 0 = \JJ^{j}(x_F).
	\end{align*}
	Consider a subsequence $\{x[\ell_k]\}_{k=0}^\infty$ such that $\lim_{k \to \infty} \JJ^{j}(x[\ell_k]) = J_{\mathrm{sup}}$ and $\lim_{k \to \infty} u^{\mathrm{one}}[\ell_k] =0$. Then, by continuity of $r$, we have 
	\begin{align*}
		0 < J_{\mathrm{sup}} = \lim_{k \to \infty} \JJ^{j}(x[\ell_k]) & = \lim_{k \to \infty} r(x[\ell_k],u^{\mathrm{one}}[\ell_k]) 
		\\
		& = r \left( \lim_{k \to \infty} (x[\ell_k],u^{\mathrm{one}}[\ell_k]   ) \right) 
		\\
		& = r(x_F,0 ) = 0.
	\end{align*}
	This is a contradiction and so, $\JJ^{j}$ is continuous at $x_F$.
\end{proof}

\begin{remark}\longthmtitle{Comparing  Proposition~\ref{pr:cost-continuity} and~\ref{pr:continuity}}\label{re:cont-comparision}
	We have gathered two sets of conditions, in Proposition~\ref{pr:cost-continuity} and~\ref{pr:continuity}, under which the function $\JJ_{(\SSs^{j},\DD^{j})}$ is continuous at the target state $x_F$. In comparison, the former result imposes implicit conditions on the system dynamics that are akin to requirements of stability. That is, starting from a point in a neighborhood in $x_F$, we can reach the target in finite number of steps, while remaining ``close'' to  $x_F$ and putting in ``minimal'' control effort. Regarding the stage-cost $r$, this result does not ask for anything more than continuity.  On the other hand, Proposition~\ref{pr:continuity} requires the system to be locally 1-step reachable and the cost to satisfy assumptions in Proposition~\ref{prop:conv_x_f}. However, the conditions specified in Proposition~\ref{pr:continuity} are easy to verify using system and cost specifications, for instance, by making use of Proposition~\ref{lem:q_conds}. 
	\oprocend
\end{remark}

\section{Computational Tractability of DR Constraint}\label{sec:comp-trac}
A closer look at the optimization problem~\eqref{eq:DR-RLMPC:main} reveals that the DR risk constraint is non-trivial to handle due to the maximization over the space of  probability distributions. In this section, considering two different types of data-driven ambiguity sets, we present computationally tractable reformulations of the DR constraint. %
All ambiguity sets are characterized by distributions that are close to the empirical distribution in some metric. We term them as distance-based ambiguity sets. Further, we assume that the support $\WW$ of all distributions contains a finite number $L$ of points, that is, $L = \abs{\WW}$. Given $N$ samples $\setr{\what_1, \dots, \what_N}$ of the uncertainty, the empirical distribution is given by the vector $\widehat{\mathbb{P}}^{N} := (p_\ell^{N})_{\ell=1}^{L}$, where $p_\ell^N = (\text{frequency of }w_\ell \in \WW \text{ in the dataset})/N$. Using this definition, a distance-based ambiguity set is of the form 
\begin{align}\label{eq:sim-ambiguity}
	\DD = \setdef{\mu \in \Delta_{L}}{\delta(\mu, \widehat{\mathbb{P}}^{N})\le \theta},
\end{align}
where $\delta(\cdot)$ is the distance function and $\theta \ge 0$ is the radius. Given samples of the uncertainty and a confidence level $\zeta \in (0,1)$, one can tune $\theta$ to obtain a reliability bound as~\eqref{eq:amb_def-n}. The distance functions considered in this section are \emph{total variation} and \emph{Wasserstein metric}. 
 
\subsection{Total variation}\label{sec:TV-reform} For distributions $P, Q \in \Delta_{L}$ supported on a finite set $\WW$, the total variation (TV) distance between them is defined as $\delta_{\ell_1}(P, Q) = \frac{1}{2}\norm{P-Q}_1$. Thus, TV-based ambiguity set is defined as $\DD_{\ell_1} = \setdef{\mu \in \Delta_{L}}{\delta_{\ell_1}(\mu, \widehat{\mathbb{P}}^{N})\le \theta}$. 
Below we provide a tractable reformulation of the DR constraint for this ambiguity set.
\begin{proposition}\longthmtitle{Reformulation for TV distance}\label{prop:TV-dual}
	The following holds:
\begin{align} \label{eq:reform-tv}
	&\sup_{\mu\in \DD_{\ell_1}} \CVaR_{\beta}^{\mu}\left[g(x,w)\right] = 
	\\
	&\begin{cases} \nonumber \inf & \quad 2 \lambda \theta + \eta + \nu + \sum_{\ell = 1}^L (\gamma_{1_\ell}-\gamma_{2_\ell})p_\ell^{N} 
		\\
		\st  &\quad \beta( 
		\gamma_{1_\ell}-\gamma_{2_\ell}+\nu) \geq [g(x,w_\ell)-\eta]_+, \quad\forall\ell\in[L], 
		\\
		&\quad 
		\gamma_{1_\ell}+\gamma_{2_\ell} = \lambda, \quad\forall\ell\in [L], 
		\\
		& \quad 
		\eta, \nu\in\real, \quad \lambda, \gamma_{1_\ell}, \gamma_{2_\ell} \in \realnonnegative ,\quad\forall\ell\in [L].
	\end{cases} 
\end{align}
\end{proposition}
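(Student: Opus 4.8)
The plan is to derive the reformulation through a sequence of duality arguments, exploiting the variational representation of CVaR and the polyhedral structure of the total-variation ball. First I would write the inner supremum out explicitly. Since $w$ is supported on the finite set $\WW = \setr{w_1,\dots,w_L}$, any $\mu \in \DD_{\ell_1}$ is a vector $(\mu_\ell)_{\ell=1}^L \in \Delta_L$, and by the definition~\eqref{eq:cvar-def-alt} of CVaR we have
\begin{align*}
	\sup_{\mu \in \DD_{\ell_1}} \CVaR_\beta^\mu[g(x,w)] = \sup_{\mu \in \DD_{\ell_1}} \inf_{\eta \in \real} \Bigl\{ \eta + \beta^{-1} \sum_{\ell=1}^L \mu_\ell [g(x,w_\ell) - \eta]_+ \Bigr\}.
\end{align*}
The first key step is to swap the $\sup_\mu$ and $\inf_\eta$. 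This is justified by a minimax theorem: the objective is linear (hence concave) in $\mu$ over the compact convex set $\DD_{\ell_1}$, and convex in $\eta$ (a supremum over $\ell$ of affine functions of $\eta$, up to the $[\cdot]_+$), so Sion's minimax theorem applies. After the swap we are left with, for each fixed $\eta$, the linear program $\sup_{\mu \in \DD_{\ell_1}} \sum_\ell \mu_\ell [g(x,w_\ell)-\eta]_+$.

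Next I would dualize this inner linear program over $\mu$. The feasible set $\DD_{\ell_1}$ is described by the simplex constraints $\mu \ge 0$, $\ones^\top \mu = 1$, together with the TV constraint $\tfrac12 \norm{\mu - \PPhat^N}_1 \le \theta$. The standard trick is to split $\mu - \PPhat^N$ into positive and negative parts, or equivalently to introduce the epigraph variables: write $\norm{\mu - \PPhat^N}_1 \le 2\theta$ via auxiliary variables bounding $|\mu_\ell - p_\ell^N|$ componentwise. Assigning a dual multiplier $\lambda \ge 0$ to the TV-ball constraint, $\nu \in \real$ to the normalization $\ones^\top\mu = 1$, and multipliers $\gamma_{1_\ell}, \gamma_{2_\ell} \ge 0$ to the two sides of the absolute-value split (these are exactly the decomposition of $\mu_\ell - p_\ell^N$), LP duality yields an objective $2\lambda\theta + \nu + \sum_\ell(\gamma_{1_\ell}-\gamma_{2_\ell})p_\ell^N$ with constraints $\gamma_{1_\ell}+\gamma_{2_\ell} = \lambda$ and $\gamma_{1_\ell}-\gamma_{2_\ell}+\nu \ge [g(x,w_\ell)-\eta]_+$. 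Strong duality holds because the primal LP is feasible (take $\mu = \PPhat^N$) and bounded (the simplex is compact). Re-introducing the outer $\inf_\eta$ — now combined with the dual $\inf$ and absorbing the $\beta^{-1}$ into a rescaling $\nu \mapsto \beta\nu$, $\gamma \mapsto \beta\gamma$, $\lambda \mapsto \beta\lambda$ (which is why the factor $\beta$ appears on the left-hand side of the constraint $\beta(\gamma_{1_\ell}-\gamma_{2_\ell}+\nu) \ge [g(x,w_\ell)-\eta]_+$) — gives precisely the claimed joint minimization over $(\eta,\nu,\lambda,\gamma_{1_\ell},\gamma_{2_\ell})$.

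The main obstacle I anticipate is handling the two nested optimizations carefully: the $\inf_\eta$ from the CVaR definition and the $\inf$ coming out of LP duality must be merged, and the $\beta^{-1}$ scaling has to be tracked through the dualization so the constants land in the right place. A secondary technical point is the rigorous justification of the minimax swap — one must confirm the objective's concavity in $\mu$ (immediate, it is linear) and lower semicontinuity / convexity in $\eta$, and that $\DD_{\ell_1}$ is compact, so that Sion's theorem (or even von Neumann's, given bilinearity) applies cleanly. The remaining steps — writing the primal LP, splitting the $\ell_1$ norm, forming the dual, invoking strong duality via Slater/feasibility-and-boundedness — are routine. I would close by noting that the $[\,\cdot\,]_+$ on the right-hand side of the first dual constraint can itself be linearized (it is equivalent to the pair $\beta(\gamma_{1_\ell}-\gamma_{2_\ell}+\nu) \ge 0$ and $\beta(\gamma_{1_\ell}-\gamma_{2_\ell}+\nu) \ge g(x,w_\ell)-\eta$), so the reformulation is a genuine finite-dimensional convex (indeed, if $g(x,\cdot)$ enters linearly, linear) program.
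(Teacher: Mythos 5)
Your proposal follows essentially the same route as the paper: the CVaR variational formula, the $\sup$--$\inf$ swap (the paper invokes a specific result of Cherukuri--Hota rather than Sion's theorem directly, but the justification is the same), and LP duality on the TV ball with multipliers $\lambda$, $\nu$, $\gamma_{1_\ell}$, $\gamma_{2_\ell}$ assigned exactly as you describe. One small correction: the factor $\beta$ in the first constraint does not come from a rescaling $(\nu,\gamma,\lambda)\mapsto\beta(\nu,\gamma,\lambda)$ --- that substitution would also scale the objective terms $2\lambda\theta+\nu+\sum_{\ell}(\gamma_{1_\ell}-\gamma_{2_\ell})p_\ell^{N}$ and break the match; rather, the inner LP has coefficients $a_\ell=\beta^{-1}[g(x,w_\ell)-\eta]_+$, so the dual constraint reads $\gamma_{1_\ell}-\gamma_{2_\ell}+\nu\ge \beta^{-1}[g(x,w_\ell)-\eta]_+$, and multiplying through by $\beta$ yields the stated form with the objective left untouched.
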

\begin{proof}
By the definition of $\CVaR$, we have
\begin{align}
		\nonumber&\sup_{\mu\in \DD_{\ell_1}} \CVaR_{\beta}^{\mu}\left[g(x,w)\right]
		\\
		\nonumber&  = \sup_{\mu\in \DD_{\ell_1}}\inf_{\eta\in\real}\left\{\eta+\frac{1}{\beta}\mathbb{E}^\mu[g(x,w)-\eta]_+\right\}
		\\
		\nonumber& \overset{(a)}{=} \inf_{\eta\in\real}\sup_{\mu\in \DD_{\ell_1}}\left\{\eta+\frac{1}{\beta}\mathbb{E}^\mu[g(x,w)-\eta]_+\right\}
		\\
		\nonumber& \overset{(b)}{=} \inf_{\eta\in\real}\sup_{\mu\in \Delta_{L}} \Bigl\{ \eta+\frac{1}{\beta}\mathbb{E}^\mu[g(x,w)-\eta]_+ \, \Big| \, \frac{1}{2}\norm{\mu-\widehat{\mathbb{P}}^{N}}_1\le \theta \Bigr\},
		\\
		\nonumber& \overset{(c)}{=} \inf_{\substack{\eta\in\real \\ \lambda\ge0}} \sup_{\mu\in \Delta_{L}} \left\{ \eta+\frac{1}{\beta}\mathbb{E}^\mu[g(x,w)-\eta]_+ \right.
		\\
		& \qquad \qquad \qquad \qquad \qquad   \left. +\lambda(2 \theta-\norm{\mu-\widehat{\mathbb{P}}^{N}}_1)\right\},\label{eq:cvar_def}
\end{align}
where (a) follows from~\cite[Theorem 3]{AC-ARH:2020}, (b) uses the definition of the ambiguity set, and (c) follows from the equivalence of the inner max optimization problem in (b) with its dual form~\cite[Chapter 5.2]{SB-LV:04}. 
Hereafter, the focus is on reformulating the inner supremum in the above obtained inf-sup problem. 
Using the definition of expected value and $1$-norm, we get 
\begin{align}\label{eq:innersup}
	&\sup_{\mu\in \Delta_{L}} \left\{ \frac{1}{\beta}\mathbb{E}^\mu[g(x,w)-\eta]_+ -\lambda\norm{\mu-\widehat{\mathbb{P}}^{N}}_1 \right\} \nonumber
	\\
	&\quad = \sup_{\mu\in \Delta_{L}} \left\{\sum_{\ell=1}^L\mu_\ell\frac{[g(x,w_\ell)-\eta]_+}{\beta} -\lambda\abs{\mu_\ell-p^{N}_\ell} \right\}
	\\
	& \quad = \begin{cases} \sup_{\mu, z} & \quad \sum_{\ell=1}^La_\ell\mu_\ell-\lambda z_\ell 
		\\
	\st & \quad z_\ell\geq \mu_\ell-p^{N}_\ell, \forall \ell \in [L],
\\
& \quad z_\ell \geq p^{N}_\ell - \mu_\ell, \forall \ell \in [L],
\\
& \quad \sum_{\ell=1}^L \mu_\ell = 1,
\\
& \quad \mu_\ell \ge 0, \forall \ell \in [L],\end{cases}
\end{align}
where in the last inequality $a_\ell := \frac{[g(x,w_\ell)-\eta]_+}{\beta}$ for all $\ell \in \{1,\dots,L\}$.
The Lagrangian of the above optimization problem is written as:
\begin{align*}
	&L(\mu, z, \gamma_1, \gamma_2, \nu,\tau) \! = \! \sum_{\ell=1}^L \! \left(a_\ell\mu_\ell-\lambda z_\ell\right) \! + \! \sum_{\ell=1}^L \! \gamma_{1\ell} \left(z_\ell - \mu_\ell + p^N_\ell \right)
	\\
	& \quad + \sum_{\ell = 1}^L \gamma_{2\ell} \left(z_\ell - p^N_\ell - z_\ell \right) + \nu \left( \sum_{\ell=1}^L \mu_\ell - 1 \right) + \sum_{\ell = 1}^L \tau_\ell \mu_\ell,
\end{align*}
where we used the shortcut $\gamma_1 = (\gamma_{1\ell})_{\ell \in [L]}$ and $\gamma_2 = (\gamma_{2\ell})_{\ell \in [L]}$. Using the above definition, the dual of~\eqref{eq:innersup} is
\begin{align*}%
	\inf  & \quad \sum_{\ell = 1}^L  \left( \gamma_{1\ell} - \gamma_{2\ell} \right)p^N_\ell -\nu 
	\\
	\st & \quad a_\ell - \gamma_{1\ell} + \gamma_{2 \ell} + \nu + \tau_\ell = 0, \quad  \forall \ell \in [L],
	\\
	& \quad \gamma_{1\ell} + \gamma_{2\ell} = \lambda, \quad \forall \ell \in [L],
	\\
	& \quad \nu \in \real, \quad \gamma_{1\ell}, \gamma_{2\ell}, \tau_\ell \ge 0, \quad \forall \ell \in [L]. 
\end{align*}
Using the above dual form of~\eqref{eq:innersup} in~\eqref{eq:cvar_def} and subsequently, using the definition of $a_\ell$ and eliminating $\tau_\ell$ by making the constraint that it involves as inequality, leads to the desired reformulation. This completes the proof. 
\end{proof}

The above result can be used to bring the problem~\eqref{eq:DR-RLMPC:main} to a tractable form. In particular, the DR constraint in there can be replaced with the a set of constraints that are obtained by upper bounding the objective function of the right-hand side of~\eqref{eq:reform-tv} and appending the constraints of the said problem. That is, 
\begin{align*}
	\begin{cases}&2 \lambda \theta + \eta + \nu + \sum_{\ell = 1}^L (\gamma_{1_\ell}-\gamma_{2_\ell})p_\ell^{N} \le \delta,	
	\\
	& \beta( 
	\gamma_{1_\ell}-\gamma_{2_\ell}+\nu) \geq [g(x,w_\ell)-\eta]_+, \quad\forall\ell\in[L], 
	\\
	& 
	\gamma_{1_\ell}+\gamma_{2_\ell} = \lambda, \quad\forall\ell\in [L], 
	\\
	&  
	\eta, \nu\in\real, \quad \lambda, \gamma_{1_\ell}, \gamma_{2_\ell} \in \realnonnegative ,\quad\forall\ell\in [L].
	\end{cases}
\end{align*}

\subsection{Wasserstein}
The Wasserstein distance (of order 1) between two discrete distributions $P, Q \in \Delta_{L}$ is defined as $\delta_{\Wb}(P, Q) =\inf_{\kappa\in\HH(P, Q)}\sum_{w_1 \in \WW, w_2 \in \WW }\norm{w_1-w_2}\kappa(w_1, w_2)$, where $\HH(P, Q)$ is the set of all distributions with marginals $P$ and $Q$, and $\kappa(w_1, w_2)$ is the probability of point $(w_1,w_2) \in \WW \times \WW$ under the distribution $\kappa$. Note that here $\kappa$ is represented by a square matrix of size $L$ where the summation of rows and columns lead to distributions $Q$ and $P$, respectively. The Wasserstein-based ambiguity set is defined as $\DD_{\Wb} = \setdef{\mu \in \Delta_{L}}{\delta_{\Wb}(\mu, \widehat{\mathbb{P}}^{N})\le \theta}$. The next result gives a reformulation of DR constraint under $\DD_{\Wb}$. The proof follows using similar arguments as in~\cite{AH-IY:20-ICRA} and~\cite{BP-AH-SB-DC-AC:20}.

\begin{proposition}\longthmtitle{Reformulation for Wasserstein distance}\label{prop:Wass-dual}
	The following holds:
	\begin{align*}
		&\sup_{\mu\in \DD_{\Wb}} \CVaR_{\beta}^{\mu}\left[g(x,w)\right] = 
		\\
		&\begin{cases} \nonumber\inf  & \quad \lambda\theta + \eta +   \sum_{\ell = 1}^L s_\ell
			\\
			\st   
			& \quad p_\ell^N \left( [g(x,w_i)-\eta]_+ - \lambda \norm{w_i - w_\ell}\right) \le \beta s_\ell, 
			\\
			& \qquad \qquad \qquad \qquad  \qquad \qquad \forall i \in [L], \ell \in [L],
			\\
			& \quad \lambda \in \realnonnegative, \quad \eta, s_\ell \in \real, \forall \ell \in [L]. %
		\end{cases}
	\end{align*}
\end{proposition}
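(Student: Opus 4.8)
The plan is to follow the same three-step template as the proof of Proposition~\ref{prop:TV-dual}: push the supremum over the ambiguity set inside the infimum defining CVaR, recast the resulting worst-case expectation as a finite-dimensional optimal-transport linear program, and dualize that program.

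First I would use the definition~\eqref{eq:cvar-def-alt} of CVaR and interchange $\sup_{\mu}$ with $\inf_{\eta}$ exactly as in step (a) of the proof of Proposition~\ref{prop:TV-dual} --- justified by \cite[Theorem 3]{AC-ARH:2020}, since $\eta \mapsto \eta + \beta^{-1}\Eb^{\mu}[g(x,w)-\eta]_+$ is convex, the dependence on $\mu$ is linear, and $\DD_{\Wb}$ is a compact convex subset of $\Delta_{L}$ --- to obtain
\[
  \sup_{\mu\in \DD_{\Wb}} \CVaR_{\beta}^{\mu}\left[g(x,w)\right] = \inf_{\eta\in\real}\Bigl\{\eta + \tfrac{1}{\beta}\,\sup_{\mu\in \DD_{\Wb}}\Eb^{\mu}[g(x,w)-\eta]_+\Bigr\}.
\]

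Next I would reformulate the inner supremum. Representing $\mu$ through a transport plan $\kappa = (\kappa_{i\ell})_{i,\ell}$ that couples it with the empirical distribution $\widehat{\Pb}^{N}$ --- so $\kappa \ge 0$, $\mu_i = \sum_{\ell}\kappa_{i\ell}$, and $\sum_i \kappa_{i\ell} = p_\ell^{N}$ --- the constraint $\delta_{\Wb}(\mu,\widehat{\Pb}^{N})\le\theta$ can be imposed as $\sum_{i,\ell}\norm{w_i-w_\ell}\kappa_{i\ell}\le\theta$, while $\Eb^{\mu}[g(x,w)-\eta]_+ = \sum_{i,\ell}\kappa_{i\ell}[g(x,w_i)-\eta]_+$. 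The simplex constraints on $\mu$ hold automatically ($\sum_i\mu_i = \sum_\ell p_\ell^{N} = 1$ and $\mu\ge0$), so $\mu$ can be eliminated and the inner supremum becomes a linear program in $\kappa$. This LP is feasible --- the diagonal plan $\kappa_{\ell\ell}=p_\ell^{N}$, $\kappa_{i\ell}=0$ for $i\ne\ell$, corresponds to $\mu=\widehat{\Pb}^{N}\in\DD_{\Wb}$ and has zero transport cost --- and its feasible set is a bounded polytope, so strong LP duality applies \cite[Chapter 5.2]{SB-LV:04}.

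I would then take the dual with a multiplier $\lambda\ge0$ for the transport-budget inequality and free multipliers $y_\ell$ for the $L$ marginal equalities $\sum_i\kappa_{i\ell}=p_\ell^{N}$; the inner supremum equals the infimum of $\lambda\theta + \sum_\ell p_\ell^{N}y_\ell$ over $\lambda\ge0$ and $y$ satisfying $y_\ell \ge [g(x,w_i)-\eta]_+ - \lambda\norm{w_i-w_\ell}$ for all $i,\ell\in[L]$. Substituting this back, merging the two infima, and renaming the dual variables --- setting $s_\ell$ proportional to $p_\ell^{N}y_\ell$ and rescaling $\lambda$ so as to absorb the factor $\beta^{-1}$ and the weights $p_\ell^{N}$, and treating indices $\ell$ with $p_\ell^{N}=0$ separately (the corresponding column of $\kappa$ is forced to zero and $s_\ell=0$ at the optimum) --- yields the stated reformulation. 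This mirrors the arguments in \cite{AH-IY:20-ICRA} and \cite{BP-AH-SB-DC-AC:20}.

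The main obstacle, as for Proposition~\ref{prop:TV-dual}, is making the two equalities rigorous: the minimax interchange (which relies on the convex--linear structure together with compactness of $\DD_{\Wb}$) and the LP strong duality (which relies on the explicit primal feasible point above and boundedness of the transport polytope). A secondary point requiring care is fixing the orientation of the coupling $\kappa$ --- the fixed marginal being the empirical $\widehat{\Pb}^{N}$ and the free one being $\mu$, so that $g(x,\cdot)$ is evaluated at the ``new'' atoms $w_i$ --- along with the bookkeeping in the final change of variables; everything else is routine LP duality.
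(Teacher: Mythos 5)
Your overall strategy --- minimax interchange via \cite[Theorem 3]{AC-ARH:2020}, recasting the worst-case expectation as a transport linear program over couplings with $\widehat{\Pb}^{N}$, and dualizing that program --- is exactly the argument the paper intends (the paper defers to \cite{AH-IY:20-ICRA} and \cite{BP-AH-SB-DC-AC:20}, and your steps mirror the proof of Proposition~\ref{prop:TV-dual}). The interchange is justified as you say, the diagonal coupling gives primal feasibility, and the dual you write down, namely $\inf_{\lambda\ge0,\,y}\{\lambda\theta+\sum_{\ell}p_\ell^N y_\ell\}$ subject to $y_\ell \ge [g(x,w_i)-\eta]_+ - \lambda\norm{w_i-w_\ell}$ for all $i,\ell$, is correct.

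The gap is in the final bookkeeping step. Substituting your dual back into the CVaR expression gives
\begin{align*}
	\inf_{\eta,\,\lambda\ge0,\,y}\Bigl\{\eta+\tfrac{\lambda\theta}{\beta}+\tfrac{1}{\beta}\sum_{\ell}p_\ell^N y_\ell\Bigr\},
\end{align*}
and setting $s_\ell = p_\ell^N y_\ell/\beta$ reproduces the stated constraints verbatim but leaves $\lambda\theta/\beta$, not $\lambda\theta$, in the objective. You propose to ``rescale $\lambda$ to absorb the factor $\beta^{-1}$,'' but the same $\lambda$ also appears in every constraint, so the substitution $\lambda\mapsto\beta\lambda$ turns the constraint into $p_\ell^N\bigl([g(x,w_i)-\eta]_+-\beta\lambda\norm{w_i-w_\ell}\bigr)\le\beta s_\ell$. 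No change of variables yields simultaneously $\lambda\theta$ in the objective and $\lambda\norm{w_i-w_\ell}$ (without $\beta$) in the constraint. Concretely, take $L=2$, $\norm{w_1-w_2}=1$, $g(x,w_1)=0$, $g(x,w_2)=1$, $p^N=(1,0)$, and $0<\theta<\beta<1$: the left-hand side equals $\theta/\beta$ (the worst-case $\mu$ moves mass $\theta$ onto $w_2$), while the right-hand side as displayed evaluates to $\theta$. So your derivation is sound but proves the corrected identity (with $\lambda\theta/\beta$ in the objective, or equivalently $\beta\lambda\norm{w_i-w_\ell}$ in the constraint) rather than the one displayed; the last step of your plan cannot be made to close as written, and the discrepancy should be flagged rather than absorbed into a rescaling.
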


\section{Simulation}\label{sec:sims}
In this section, Algorithm~\ref{ag:DR_iteration} is applied in form of a centralized control strategy to solve a motion planning problem for two mobile robots in a 2D environment that contains a randomly moving obstacle. With the progress of iterations, more data is collected and the safe set is gradually expanded. This improves the cost-performance of the system while maintaining the required safety level.
\subsubsection{Setup}
Consider the following model for two circular mobile robots navigating in a 2D environment as a single system:
\begin{align*}
    x_{t+1} & = \begin{bmatrix}
            1 & 0 & 1 & 0 & 0 & 0 & 0 & 0 \\
            0 & 1 & 0 & 1 & 0 & 0 & 0 & 0 \\
            0 & 0 & 1 & 0 & 0 & 0 & 0 & 0 \\
            0 & 0 & 0 & 1 & 0 & 0 & 0 & 0 \\
            0 & 0 & 0 & 0 & 1 & 0 & 1 & 0 \\
            0 & 0 & 0 & 0 & 0 & 1 & 0 & 0 \\
            0 & 0 & 0 & 0 & 0 & 0 & 1 & 0 \\
            0 & 0 & 0 & 0 & 0 & 0 & 0 & 1  
    \end{bmatrix}x_t + \begin{bmatrix}
        0 & 0 & 0 & 0\\
        0 & 0 & 0 & 0 \\
        1 & 0 & 0 & 0\\
        0 & 1 & 0 & 0\\
        0 & 0 & 0 & 0\\
        0 & 0 & 0 & 0 \\
        0 & 0 & 1 & 0\\
        0 & 0 & 0 & 1\\
    \end{bmatrix}u_t.
\end{align*}
Here, the state $x = [z_1,y_1,v_{z_1},v_{y_1}, z_2,y_2,v_{z_2},v_{y_2}]^\top$ contains the position $(z_i,y_i)$ of the center of mass of the $i^{th}$ robot and its velocity in $z$ and $y$ directions. The input $u = [a_{z_1}, a_{y_1}, a_{z_2}, a_{y_2}]^\top$ consists of the acceleration of both agents in $z$ and $y$ directions. The objective of this problem is to steer the agents from the initial point $x_S = [0, 1, 0, 0, 0, -1, 0, 0]^\top$ to the target point $x_F = [5, 3, 0, 0, 6, 2, 0, 0]^\top$ while constraining the risk of colliding with a square obstacle of length $\ell_\OO=1.8$ that moves randomly around the point $o = [3, 1.2]^\top$. Specifically, the position of the obstacle in each time-step is given by $o_t = o + \begin{bmatrix}\frac{1}{\sqrt{2}} & -\frac{1}{\sqrt{2}}\end{bmatrix}^\top w_t$,
where $w_t\in\real$ is the uncertainty. We assume that $w_t$ is defined by the $\text{Beta-binomial}(9, 10, 9)$ distribution supported on the set of nine points  
	$\setdef{-0.9 + \frac{1.8i}{8}}{i \in \until{8}}$.
Since a small number of samples are usually available in practice, we start with only $N_0=5$ samples. We assume that in each time-step of each iteration, the obstacle's position is observable, which forms the dataset of samples.
Given position $o_t$, the region of the environment occupied by the obstacle is given as
\begin{align*}
	\OO_t := \setdef{(z,y) \in \real^2}{o_t-\tfrac{\ell_\OO}{2}  \ones_2 \le [z \, \, y]^\top  \le o_t + \tfrac{\ell_\OO}{2}  \ones_2},
\end{align*}
where $\ones_2 = [1, 1]^\top$. Thus, the region $\OO_t$ can be equivalently written in form of $\OO_t = \setdef{[z \, \, y]^\top+w}{A_o[z \, \, y]^\top\le b_o}$,  where $A_o$ and $b_o$ are appropriately chosen.
According to the objective of avoiding collision in  this experiment, the constraint function $g$ is given as the minimum distance between the agents and the region $\OO_t$ occupied by the obstacle. More precisely, $g(x,w) := \max_i\{d_{\mathrm{min}} - \dist(C_ix,\OO_t)\}$, where $d_{\mathrm{min}} = 0.02$ and $\dist(C_ix,\OO_t)$ stands for the Euclidean distance of the $i^{th}$ agent from the set $\OO_t$. Specifically,
\begin{align*}
	\dist(Cx,\OO_t) = \min_{p \in \OO_t} \norm{Cx - p}.
\end{align*}
As the function $g$, and so the distributionally robust constraint~\eqref{eq:DR-RLMPC:main} are non-convex, and non-trivial to handle, we use the reformulation provided by the Proposition~\ref{prop:TV-dual} and the dual form of $g$ derived in~\cite{AN-ARH:2022}.

The stage cost is quadratic, given as ${r(x_t, u_t)=(x_F-x_t)^{\top}Q(x_F-x_t) + u_t^{\top}Ru_t}$, where ${Q=I_8}$ and ${R=\text{diag}(0.1, 0.1, 0.1, 0.1)}$. Note that $r$ satisfies the condition~\eqref{eq:st-cost}. 
The safe set $\SSs^0$ and its respective terminal cost $Q^0$, that are required for initializing the first iteration of Algorithm~\ref{ag:DR_iteration}, are generated using an open-loop controller that drives the agents to their respective targets while being far away from the obstacle, see Figure~\ref{fig:trajs}. We execute the algorithm for $20$ iterations.
 The prediction horizon is $K=5$. We use $\beta = 0.2$ as the risk-averseness coefficient and $\delta = 0$ as the right-hand side of the risk constraint.

We consider ambiguity sets defined using the total variation distance, see Section~\ref{sec:TV-reform}.
The optimization problem~\eqref{eq:DR-RLMPC:main}, considering the reformulation provided in Proposition~\ref{prop:TV-dual}, is implemented in GEKKO~\cite{LB-DH-RM-JH:2018} using APOPT solver. The implementation is available open source here~\footnote{\url{https://github.com/alirezazolanvari/DR-IRMPC}}.
\subsubsection{Results}
The obtained trajectories for different ambiguity set sizes are presented in Figure~\ref{fig:trajs}.
        \begin{figure*}
        \centering
        \begin{subfigure}[b]{0.32\linewidth}
            \centering
            \includegraphics[width=\linewidth]{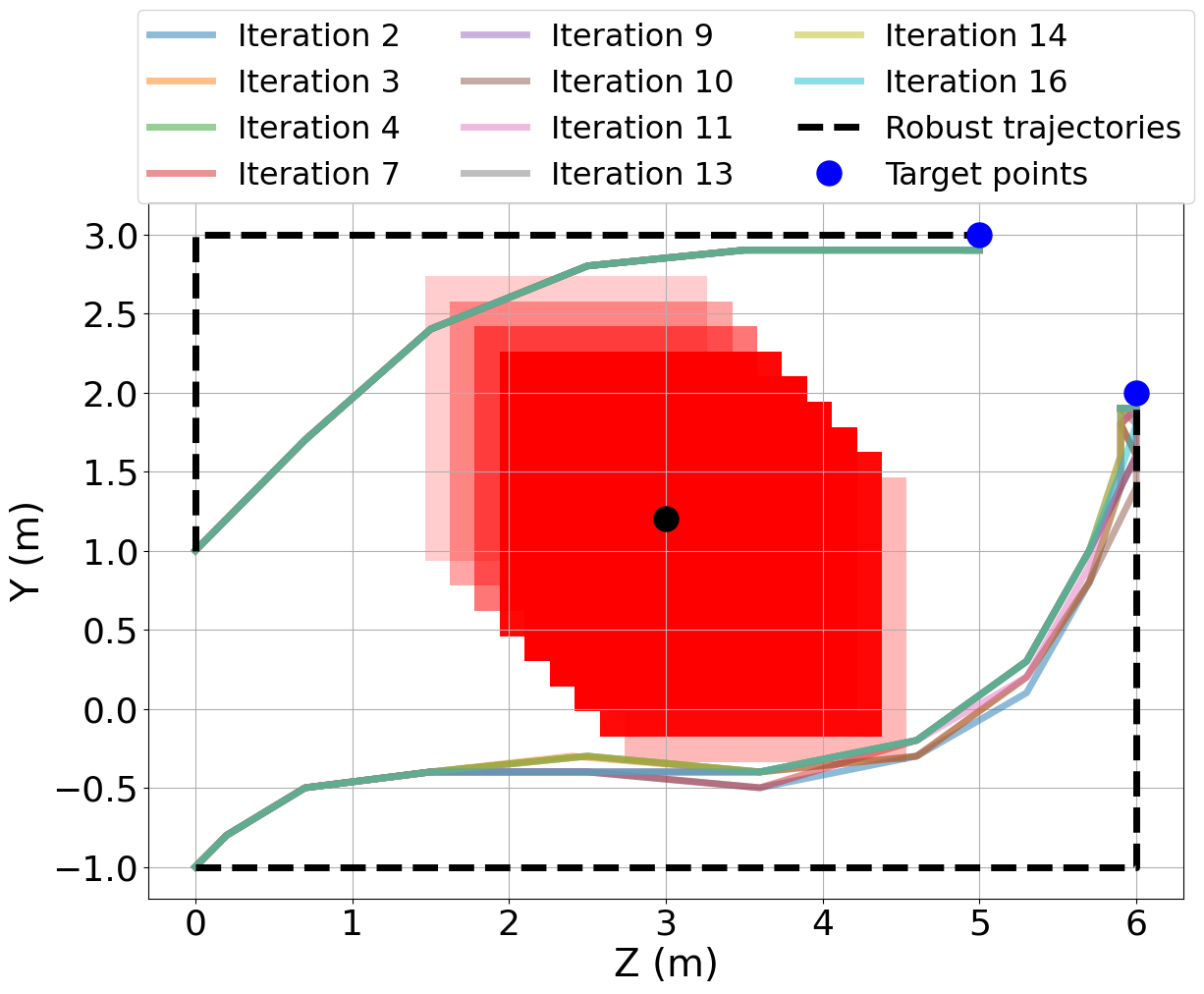}
            \caption[Network2]%
            {{\small $\theta$ = $5\times10^{-4}$}}    
            \label{fig:trajs_a}
        \end{subfigure}
        \qquad
        \begin{subfigure}[b]{0.32\linewidth}  
            \centering 
            \includegraphics[width=\linewidth]{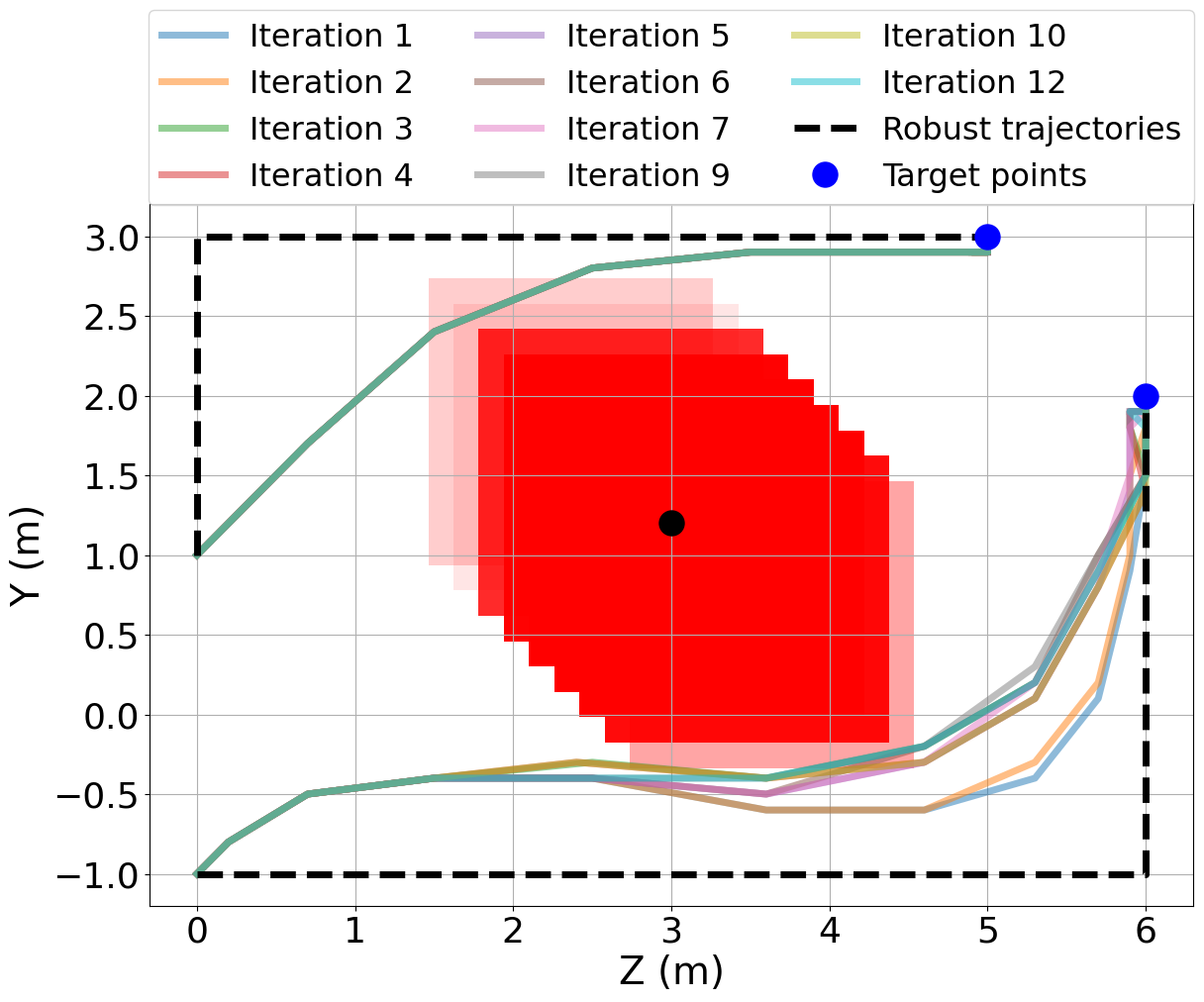}
            \caption[]%
            {{\small $\theta$ = $0.05$}}
            \label{fig:trajs_b}
        \end{subfigure}
        \vskip\baselineskip
        \begin{subfigure}[b]{0.32\linewidth}   
            \centering 
            \includegraphics[width=\linewidth]{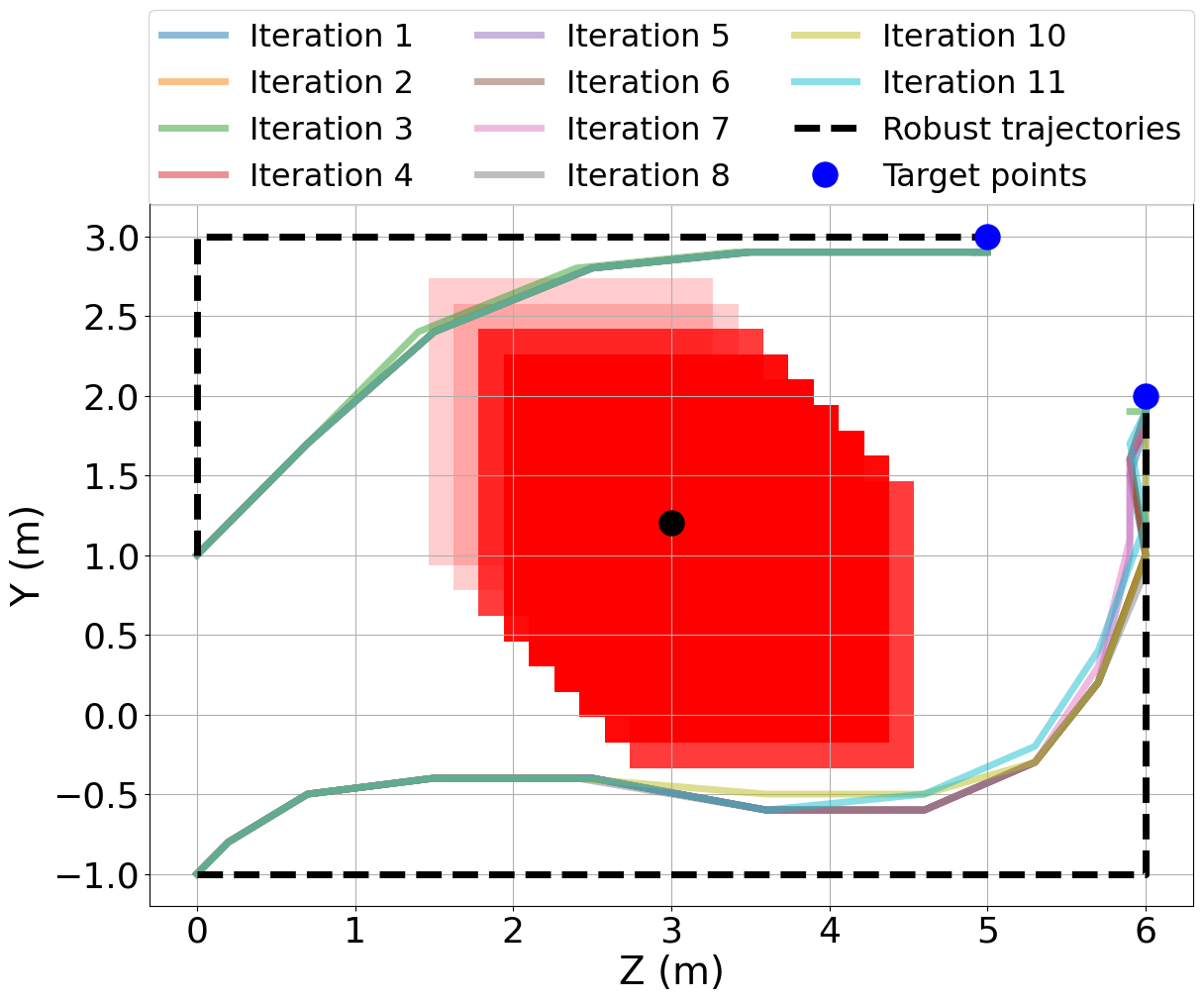}
            \caption[]%
            {{\small $\theta$ = $0.1$}}    
            \label{fig:trajs_c}
        \end{subfigure}
        \qquad
        \begin{subfigure}[b]{0.32\linewidth}   
            \centering 
            \includegraphics[width=\linewidth]{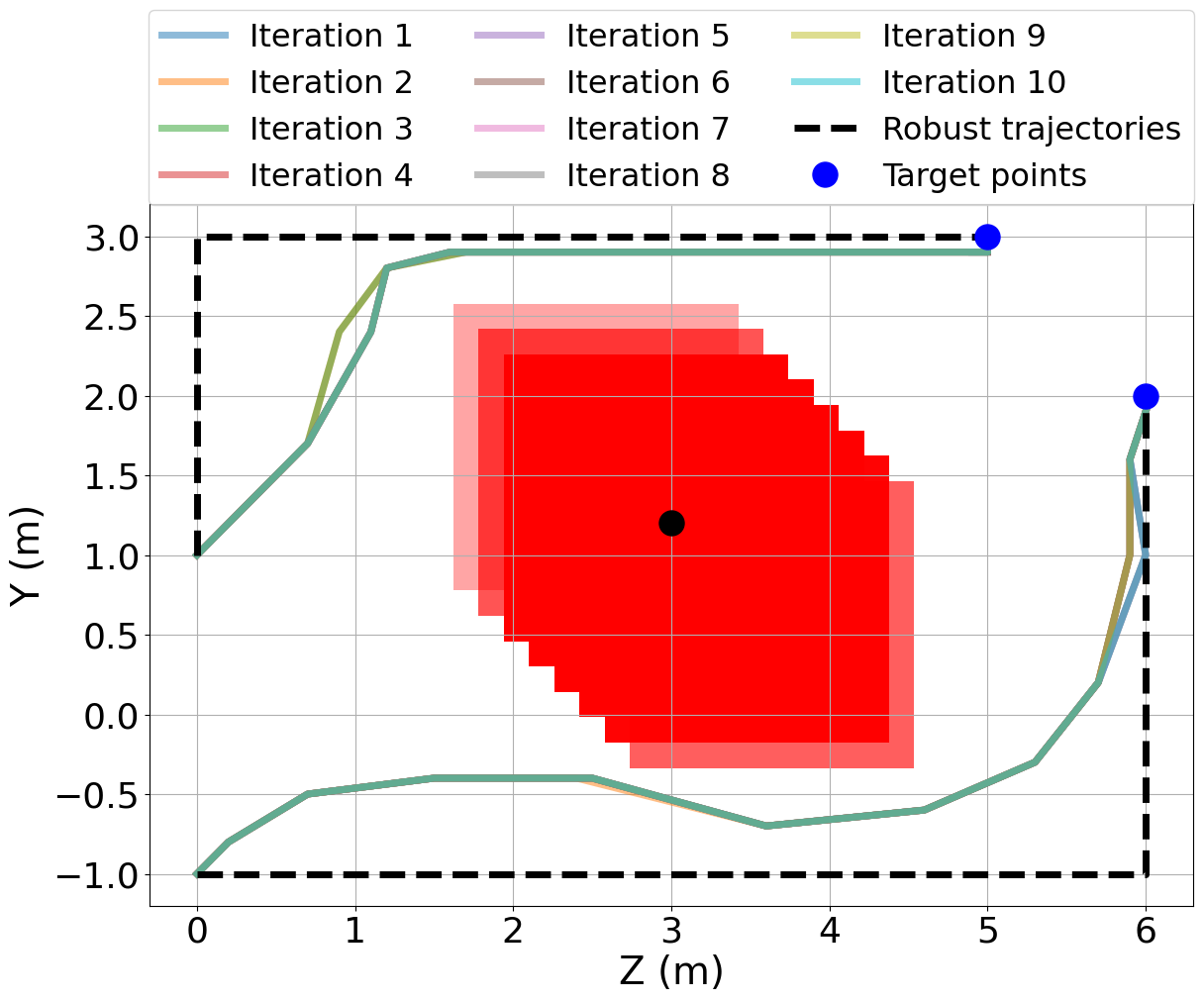}
            \caption[]%
            {{\small $\theta$ = $0.2$}}    
            \label{fig:trajs_d}
        \end{subfigure}
        \caption{\footnotesize Plots illustrating the application of the DR-RC-Iterative-MPC procedure for the task of navigating two mobile robots in an environment with randomly-moving obstacle (see Section~\ref{sec:sims} for details). We consider four different radii for the ambiguity set and for each case, the radius does not change over the iterations. The robust trajectories (dashed black lines) are same for all cases. Each realization of the obstacle is plotted with a shaded red square. Here, only the first ten trajectories in which both agents reach their targets are presented. As observed, the trajectories become more conservative as the radius of the ambiguity set increases.} 
        \vspace*{-3ex}
        \label{fig:trajs}
    \end{figure*}
    In the first iteration, due to the constraint $x_{K}\in\Pis{\SSs^0}$, the system follows a similar trajectory for $\theta \in \{0.05, 0.1, 0.2\}$. However, differences become more apparent as iterations progress. For small ambiguity sets, the trajectories are closer to the obstacle. For larger ones, the algorithm becomes more conservative to the extent that for $\theta = 0.2$ the agent stops exploring and is only concerned about safety.  There is a noteworthy observation in Figure~\ref{fig:trajs_d}, that is, trajectories of the upper agent get closer to the obstacle in the first few iterations but as more data is collected, the safe set gets refined in 
    later iterations and the robot deviates from the obstacle more strongly. Finally in Figure~\ref{fig:collision} we underline the impact of the size of the ambiguity set on cost-performance and safety. As shown in the figure, smaller ambiguity sets provide cost-efficient trajectories while they also increase the probability of colliding with the obstacle. 
    As a result, if an appropriate value is chosen for the radius of the ambiguity set, the algorithm is able to provide an acceptable level of safety even using small number of samples. 
\begin{figure}
    \centering
    \includegraphics[width=0.75\linewidth]{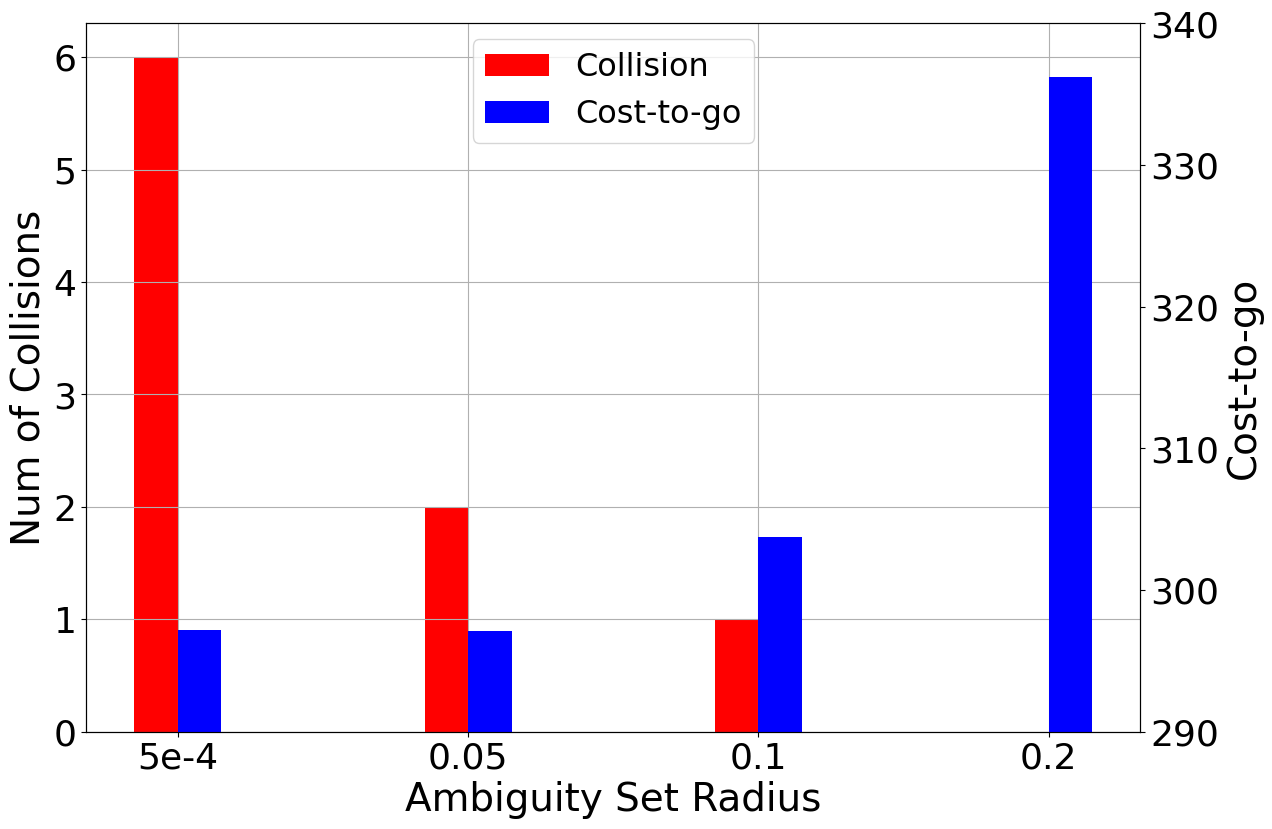}
    \caption{\footnotesize The effect of the size of the ambiguity set on safety and performance. The red block represents the number of iterations (out of $20$) in which the trajectory collides with the obstacle at least once. The blue block depicts the iteration cost of the collision-free iteration that has the highest index.}
    \vspace*{-3ex}
    \label{fig:collision}
\end{figure}

\section{Conclusions}

We considered a risk-constrained infinite-horizon optimal control problem and designed an iterative MPC-based scheme to solve it. Our procedure approximated the risk constraints using their data-driven distributionally robust counterparts. Each iteration in our method generated a trajectory that is provably safe and that converges to the equilibrium asymptotically. Lastly, we implemented our algorithm to find a risk-averse path for  mobile robots that are in an environment with uncertain obstacle. Future work will explore the convergence of the iterations to the optimal solution of the risk-constrained optimal control problem. We wish to implement this method for a multi-agent setup in a distributed manner, where different agents collect different samples of the uncertainty. Finally, with online implementation as goal, we plan to reduce the computational effort of the method by approximating the DR constraint in an effective manner. %

\bibliographystyle{ieeetr}

\appendix
\renewcommand{\theequation}{A.\arabic{equation}}
\renewcommand{\thetheorem}{A.\arabic{theorem}}
\renewcommand{\theproposition}{A.\arabic{proposition}}

The following result on attractivity of the equilibrium point of a discrete-time system aids us in showing convergence of our iterative $\drmpc$ scheme. The proof follows standard Lyapunov arguments but the exact result is not available in the literature. We provide the proof here for completeness.

\begin{proposition}\label{prop:xbar_conv}\longthmtitle{Attractivity of discrete-time system}
	Consider the system
	\begin{align}\label{eq:gen-sys}
		x_{t+1} = f(x_t), \quad x_0 \in \XX \subset \real^n,
	\end{align}
	where $\map{f}{\XX}{\XX}$ and $\XX$ is a compact set. Given a point $x^* \in \XX$, let the function $\map{V}{\XX}{\realnonnegative}$ satisfy 
	\begin{align}
		V(x^*) = 0, \quad V(x) > 0 \quad \forall x \in \XX \setminus \{x^*\}.
	\end{align}
	Assume there exists a continuous function $\map{\phi}{\XX}{\realnonnegative}$ such that $\phi(x^*) = 0$, $\phi(x) > 0$ for all $x \in \XX\setminus \{x^*\}$, and 
	\begin{align}\label{eq:V-lyap}
		V(f(x))-V(x)\leq-\phi(x) \quad \text{ for all } x \in \XX.
	\end{align}
	Then, any trajectory $\{x_t\}$ of~\eqref{eq:gen-sys} satisfies $\lim_{t \to \infty} x_t = x^*$. 
\end{proposition}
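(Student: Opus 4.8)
The plan is to run a standard Lyapunov-type argument, using that the value $V(x_t)$ along any trajectory is monotone and bounded while the continuity of the auxiliary function $\phi$ carries the limiting information. Since $\map{f}{\XX}{\XX}$, every trajectory stays in the compact set $\XX$. First I would observe that~\eqref{eq:V-lyap} gives $V(x_{t+1}) \le V(x_t) - \phi(x_t) \le V(x_t)$ for all $t \ge 0$, so $\{V(x_t)\}_{t \ge 0}$ is non-increasing and bounded below by $0$; hence it converges to some $V_\infty \ge 0$. Note that at this stage I cannot reason directly with $V$ at accumulation points of the trajectory, since $V$ is not assumed continuous — this is precisely why the hypothesis isolates a continuous $\phi$.

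Second, I would telescope~\eqref{eq:V-lyap}: for every $T \ge 1$,
\begin{align*}
	\sum_{t=0}^{T-1} \phi(x_t) \le V(x_0) - V(x_T) \le V(x_0) < \infty ,
\end{align*}
since $V$ takes values in $\realnonnegative$ and is finite-valued. Letting $T \to \infty$ shows that the series $\sum_{t=0}^{\infty} \phi(x_t)$ converges, and therefore $\phi(x_t) \to 0$ as $t \to \infty$.

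Third, I would upgrade this to convergence of the whole trajectory to $x^*$ using compactness. Suppose for contradiction that $x_t \not\to x^*$. Then there exist $\eps > 0$ and a subsequence $\{x_{t_k}\}$ with $\norm{x_{t_k} - x^*} \ge \eps$ for all $k$. By compactness of $\XX$ a further subsequence converges, say $x_{t_{k_m}} \to \bar x \in \XX$, and then $\norm{\bar x - x^*} \ge \eps$, so $\bar x \ne x^*$. Continuity of $\phi$ together with $\phi(x_t) \to 0$ yields $\phi(\bar x) = \lim_{m \to \infty} \phi(x_{t_{k_m}}) = 0$, which contradicts $\phi(\bar x) > 0$ (valid since $\bar x \ne x^*$). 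Hence $x_t \to x^*$, as claimed.

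The argument is essentially routine; the only point requiring care — and the reason the statement carries the separate function $\phi$ rather than phrasing everything through $V$ — is that $V$ need not be continuous, so monotonicity of $V(x_t)$ by itself does not constrain accumulation points. Placing the continuity and positive-definiteness on $\phi$, and extracting the summability of $\phi(x_t)$ from the telescoping step, is exactly what makes the final compactness/subsequence argument go through. I expect that last step (promoting ``$\phi(x_t)\to 0$ plus every subsequential limit equals $x^*$'' to genuine convergence) to be the most delicate piece, though it is short.
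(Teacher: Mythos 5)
Your proof is correct. It follows the same overall strategy as the paper's -- a Lyapunov argument exploiting compactness of $\XX$, monotonicity of $V$ along trajectories, and continuity plus positive definiteness of $\phi$ -- but the mechanism delivering the contradiction differs. You telescope~\eqref{eq:V-lyap} to obtain $\sum_{t=0}^{\infty}\phi(x_t)\le V(x_0)<\infty$, conclude $\phi(x_t)\to 0$, and then use continuity of $\phi$ at an accumulation point $\bar x\neq x^*$ to contradict $\phi(\bar x)>0$. The paper instead never establishes $\phi(x_t)\to 0$: it takes a compact neighborhood $\NN_\eps$ of $\bar x$ excluding $x^*$, sets $\bar\phi:=\min_{x\in\NN_\eps}\phi(x)>0$, and derives an infinite-descent contradiction with $V\ge 0$ because $V$ drops by at least $\bar\phi$ along infinitely many terms of the subsequence. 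Your route is marginally cleaner in that it only needs continuity of $\phi$ at the single accumulation point and yields the stronger intermediate fact that $\phi(x_t)$ is summable; the paper's route avoids discussing convergence of the series and works directly with the uniform lower bound on $\phi$ over a neighborhood. Both correctly avoid assuming any continuity of $V$, which, as you note, is the reason the hypothesis carries a separate continuous $\phi$.
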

\begin{proof}
	By contradiction, assume that there exists a trajectory $\{x_t\}_{t=0}^\infty$ of~\eqref{eq:gen-sys} such that $\lim_{t \to \infty} x_t \not = x^*$. Using this fact and the compactness of $\XX$, there exists a subsequence of $\{x_t\}_{t=0}^\infty$, denoted as $\{x_{t_k}\}_{k=0}^\infty$, such that $\lim_{k\rightarrow\infty}x_{t_k}=\bar{x}$ and $\bar{x}\neq x^*$. 
Let $\epsilon>0$ be such that $\NN_\epsilon:=\setdef{x \in \XX}{\norm{x-\bar{x}}\leq\epsilon}$ does not contain $x^*$. Let $\bar{\phi}:=\min\limits_{x\in\NN_\epsilon}\phi(x) > 0$. This is well defined as $\NN_\epsilon$ is compact and $\phi$ is continuous. Since $x_{t_k} \to \bar{x}$, there exists a $K$ such that $x_{t_k} \in \NN_\eps$ for all $k \ge K$. Using~\eqref{eq:V-lyap} and the definition of $\bar{\phi}$,  we have
\begin{align}
	V(x_{t_{k+1}})  \le V(x_{t_k}) - \phi(x_{t_k} ) 
 \le V(x_{t_k}) - \bar{\phi} \label{eq:lyap-inf-dec},
\end{align}
for all $k \ge K$. The sequence $\{V(x_t)\}_{t=1}^\infty$ is non-increasing due to~\eqref{eq:V-lyap}. This fact along with~\eqref{eq:lyap-inf-dec} and the lower bound on $V$ yields a contradiction. This completes the proof. 
\end{proof}

\end{document}